\newtheorem{theorem}{Theorem}
\newtheorem{claim}[theorem]{Claim}
\newtheorem{proposition}[theorem]{Proposition}
\newtheorem{definition}[theorem]{Definition}
\newtheorem{remark}[theorem]{Remark}
\numberwithin{equation}{section} 
\numberwithin{theorem}{section}
\newcommand{\mapright}[1]{\smash{\mathop{   \hbox to 0.7cm{\rightarrowfill}}
 \limits^{#1}}}
\newcommand{\Z}{\ensuremath{\mathbb{Z}}}
\newcommand{\Q}{\ensuremath{\mathbb{Q}}}
\newcommand{\R}{\ensuremath{\mathbb{R}}}
\newcommand{\C}{\ensuremath{\mathbb{C}}}
\newcommand{\GL}{\ensuremath{\mathrm{GL}}}
\newcommand{\reduce}[1]{\scalebox{1}{\ensuremath{#1}}}
\newcommand{\vol}{\ensuremath{\mathrm{vol}}}
\newcommand{\Vol}{\ensuremath{\mathrm{Vol}}}
\newcommand{\norm}[1]{\ensuremath{\left| #1 \right|}}
\newcommand{\restrict}[2]{\ensuremath{\left. #1 \right|_{#2}}}
 \DeclareMathOperator{\Aut}{Aut}
\DeclareMathOperator*{\smallcup}{\reduce{\bigcup}}
\newcommand{\Chow}{{\mathrm{Chow}}}
\def\Bl{\mathrm{Bl}}
\def\bs{\boldsymbol}
\def\C{\mathbb C}
\def\conv{{\rm{Conv}}}
\def\D{\Delta}
\def\p{\partial}
\def\R{{\mathbb R}}
\def\RR{{\mathbb R}}
\begin{document}
 \title[Blow-up formula of Chow stability]{On the blow-up formula of the Chow weights for polarized toric manifolds}

\author{King Leung Lee}
\address{IMAG, 
Universit\'{e} de Montpellier, $499$-$554$ Rue du Truel $9$, $34090$ Montpellier, France}
\email{king-leung.lee@umontpellier.fr}

\author{Naoto Yotsutani}

\address{Department of Mathematics, Faculty of Science, Shizuoka University, $836$ Ohya, Suruga-ku, Shizuoka-shi, Shizuoka, $422$-$8529$, Japan}
\email{yotsutani.naoto@shizuoka.ac.jp}

\address{IMAG, 
Universit\'{e} de Montpellier, $499$-$554$ Rue du Truel $9$, $34090$ Montpellier, France}
\email{naoto.yotsutani@umontpellier.fr}

\makeatletter
\@namedef{subjclassname@2020}{%
  \textup{2020} Mathematics Subject Classification}
\makeatother

\subjclass[2020]{Primary 51M20; Secondary  53C55, 14M25}

\keywords{Toric varieties, Chow weight, Blow-ups, Lattice polytope.} 
\date{ \today}
\maketitle

\noindent{\bfseries Abstract.}
Let $X$ be a smooth projective toric variety, and let $\widetilde{X}$ denote the blow-up of $X$ at finitely many distinct torus-invariant points.
In this paper, we derive  an explicit combinatorial formula for the Chow weight of $\widetilde{X}$ in terms of the underlying toric manifold $X$ and the symplectic cuts of its associated Delzant polytope. As an application, we study toric blow-ups of the projective plane and compare their Chow stability with that of blow-ups at general points.

\section{Introduction}
Throughout this paper, we follow the notation and terminology of \cite{LY24}. 
Let $(X,L)$ be an $n$-dimensional polarized manifold, where $L\to X$ is an ample line bundle over an $n$-dimensional compact K\"ahler manifold.
Let $\Aut(X,L)$ denote the group of holomorphic automorphisms of $(X,L)$ modulo the trivial $\C^\times$-action.
It is well known, following Donaldson \cite{Don01}, that if a polarized manifold $(X,L)$ admits a constant scalar curvature K\"ahler (cscK) metric in the class $c_1(L)$,
and if the automorphism group $\Aut(X,L)$ is discrete, then $(X,L)$ is asymptotically Chow stable.
However, subsequent works \cite{OSY12,DVZ12,LLSW19} showed that this implication fails in the presence of non-trivial automorphisms.
In particular, Della Vedova and Zuddas \cite{DVZ12} used blow-up constructions to produce explicit examples of cscK classes that are asymptotically Chow unstable.
Independently, Odaka \cite{Oda12} asymptotically Chow unstable cscK orbifolds with trivial automorphism groups by developing a blow-up formula for the Donaldson-Futaki (DF) invariant along flag ideals.

To briefly recall the key ideas of \cite{DVZ12}, Della Vedova and Zuddas observed that the obstruction to asymptotic Chow semistability 
does not vanish under blow-ups at distinct points. 
Based on this insight, they established:
\begin{enumerate}
\item a blow-up formula for the obstruction to asymptotic Chow semistability; and
\item the existence of cscK metrics on appropriately polarized blow-up manifolds $(\widetilde{X},\widetilde{L})$, as ensured by the Arezzo-Pacard theorem \cite{AP09}.
\end{enumerate}
As a consequence, they constructed many new examples of asymptotically Chow unstable cscK classes in complex dimension two.
These developments highlight the importance of blow-up operations in understanding the relationship between GIT-stability and the existence of canonical K\"ahler metrics. We also remark that the DF invariant of a blow-up manifold was formulated and studied by Stoppa in \cite{St10}.

The goal of the present paper is to derive an explicit formula for the {\emph{Chow weight}} of a toric blow-up manifold using the technique of {\emph{symplectic cutting}}
\cite{Ler95, LT97}.
To state our main result more precisely, let $\D\subset \R^n$ be an $n$-dimensional Delzant lattice polytope.
Denote by $E_\D(t)$ the {\emph{Ehrhart polynomial}} of $\Delta$, which admits the asymptotic expansion 
\[
E_\D(t)=\Vol(\D)t^n+\frac{\Vol(\p \D)}{2}t^{n-1}+O(t^{n-2}).
\]
Recall that a {\emph{piecewise linear convex function}} $g(\bs x)$ on $\D$ is of the form
\[
g(\bs x)=\max\Set{g_1, \dots, g_\ell},
\]
where each $g_j$ is an affine function on $\D$.
The function $g$ is called {\emph{rational}} if 
\[
g_j(\bs x)=\sum_{i=1}^n a_{j,i}x_i+c_j, \qquad j=1, \dots, \ell,
\]
with some $(a_{j,1}, \dots, a_{j,n})\in \Q^n$ and $c_j\in \Q$. 

In \cite[Lemma 4.2]{YZ19}, the Chow weight of the associated polarized toric manifold $(X_\D, \mathcal O_{X_\D}(i))$ was explicitly computed following the arguments of \cite{RT07,ZZ08}. More precisely, consider the toric test configuration $(\mathscr X_f, \mathscr L_f)$ for $(X_\D, \mathcal O_{X_\D}(i))$ induced by a rational piecewise linear {\emph{concave}} function $f(\bs x)=-g(\bs x)$ on $\D$.
Then the Chow weight of $(\mathscr X_f, \mathscr L_f)$ is given by
\begin{equation}\label{def:ChowWt0}
\Chow_\D(f;i)=\Vol(\D)\sum_{\bs a\in \D\cap(\Z/i)^n}f(\bs a)-E_\D(i)\int_\D f(\bs x)\, dv.
\end{equation}
From the viewpoint of symplectic geometry,
let $(X_\D, \omega_\D, \mathbb T^n, \mu_\D)$ denote the associated symplectic toric manifold,
where $\mathbb T^n=(S^1)^n$ and 
\[
\mu_\D:X_\D \to \mathrm{Lie}(\mathbb T^n)^*\cong\R^n
\] 
is the corresponding moment map. The {\emph{symplectic blow-up}} of $(X_\D,\omega_\D)$ at a fixed point of the $\mathbb T^n$-action again yields a symplectic toric manifold.
More precisely, let $p$ be a $\mathbb T^n$-fixed point and let
\[
\bs v=\mu_\D(p)
\]
be the corresponding vertex of $\D$. For the symplectic blow-up
\[
\varpi: X_{\D^*}:=\mathrm{Bl}_p(X_\D)\dasharrow X_\D,
\]
it is known that the moment polytope $\D^*$ of the blow-up manifold $(X_{\D^*}, \omega_{\D^*})$ is obtained from $\D$ by cutting off the corner at $\bs v \in \mathcal V(\D)$ 
(cf. \cite[Chapter $29$]{CdS08}), where $\mathcal V(\D)$ denotes the set of vertices of $\D$.

More generally, suppose we perform symplectic blow-ups of $(X_\D, \omega_\D)$ at $\mathbb T^n$-fixed points $p_1, p_2, \ldots, p_\ell$. The resulting manifold
\[
X_{\D'}:=\mathrm{Bl}_{(p_1, \dots, p_\ell)}(X_\D) \dasharrow X_\D,
\]
has a moment polytope $\D'$, and the original polytope admits a decomposition
\begin{equation}\label{eq:decomp0}
\D=\D'\cup \smallcup_{a=1}^\ell D_a,
\end{equation}
where each $D_a$ is an $n$-dimensional simplex containing exactly one vertex of $\D$. (See Section \ref{sec:SetUp} for the precise terminology). 

Our main theorem concerns the Chow weight of toric blow-ups in complex dimension two: 
\begin{theorem}[See, Theorem \ref{thm:BlowUp}]\label{thm:main}
Let $\D\subset \R^2$ be a two-dimensional Delzant lattice polytope decomposed as in $\eqref{eq:decomp0}$, corresponding to the symplectic toric blow-up of $X_\D$ at  the torus-fixed points $p_1, \dots, p_\ell$. 
Assume that there exists the minimum integer $k\in \Z_{>0}$ such that $k\D'$ is a two-dimensional Delzant lattice polytope (see, Section \ref{sec:SetUp} for further details).
Let $\widetilde X:=X_{k\D'}$ be the associated toric surface. 
Then, for any $i\in \Z_{>0}$, the Chow weight of $\widetilde X$ with respect to the function $f(\bs x)=\bs x$ satisfies
\[
\Chow_{k\D'}(\bs x;i)= \Chow_{k\D}(\bs x;i)+i\mathcal{DF}_{k\D,1}+\mathcal{DF}_{k\D,2},
\]
where $\mathcal{DF}_{k\D,1}$ and $\mathcal{DF}_{k\D,2}$ are explicitly defined combinatorial invariants  (see $\eqref{eq:DF1}$ and $\eqref{eq:DF2}$).
\end{theorem}
See Proposition \ref{prop:BlowUp} for higher dimensional generalization. 
As an application, we study the blow-up of $\C P^2$  at six torus-fixed points. Let
\begin{itemize}
\item $Y$ denote the del Pezzo surface of degree $6$, written $dP_6$, obtained from $\C P^2$ by toric blow-ups at the three torus-fixed points of the standard $\mathbb T^2$-action; and  
\item $Z$ denote the complex surface obtained by further blowing up $Y$ at three additional torus-fixed points.
\end{itemize} 
We show that $Z:=\mathrm{Bl}_{(\text{$6$pts})}(\C P^2)$, constructed by this (toric) procedure, is {\emph{Chow unstable}}.

In contrast, let $X:=dP_3$ be the {\emph{del Pezzo surface of degree $3$}}, obtained by blowing up six general points in $\C P^2$.
Then $(X, -K_X)$ is known to be asymptotically Chow stable by the results of Tian \cite{Ti90} and Donaldson \cite{Don01}.
This contrast illustrates the subtle influence of {\emph{blow-up configuration}} - toric versus generic - on stability (see Section \ref{sec:6pts}).   

We note that the asymptotic behavior of a certain combinatorial invariant, called the \emph{$k$-quantized barycenter} in \cite{JR25}, has been studied by Jin and Rubinstein. In the same work, they established asymptotic expansions, with respect to the tensor powers
of a line bundle for algebro-geometric invariants, arising in K\"ahler geometry.
In addition, a related investigations of the Chow weights of blow-up varieties
\[
\varpi:X'=\mathrm{Bl}_Z(X)\dasharrow X,
\]
where $X$ is an irreducible normal projective variety and $Z\subsetneq X$ is a proper subvariety, were carried out in \cite{Gr23}.
While \cite{Gr23} approaches the problem from a purely algebro-geometric viewpoint, 
the emphasis of the present paper is on the combinatorial aspects of projective toric varieties.

This paper is organized as follows. In Section \ref{sec:Prelim}, we briefly review the notation and terminology used throughout the paper. We also introduce the notion of the {{Chow weight}} of a toric manifold in terms of its associated Delzant polytope $\D$, 
and establish its fundamental properties under affine transformations of $\D$ (see Proposition \ref{prop:AffineTrans}).
In Section \ref{sec:BlowUp}, we derive an explicit 
blow-up formula for the Chow weight of polarized toric manifolds and prove 
Theorem \ref{thm:main} in Section \ref{sec:ToricSurf}. 
Section \ref{sec:App} is devoted to applications of Theorem \ref{thm:main}, including explicit examples illustrating the geometric differences between toric blow-ups and general blow-ups. In particular, in Section \ref{sec:1stStep}, we derive an explicit formula for the Chow weights of {\emph{all}} toric surfaces whose moment polytopes have five vertices.

\vskip 7pt

\noindent{\bfseries Acknowledgements.}
The authors would like to thank the anonymous referee for valuable comments and suggestions that improved the manuscript.
The second author (NY) is especially grateful to Professor Hajime Ono for inspiring discussions on this subject and for generously sharing unpublished work during NY's  postdoctoral stay at USTC in China.
This work was supported by the first author's ANR-$21$-CE$40$-$0011$ JCJC project MARGE. 
The second author was partially supported by JSPS KAKENHI Grants JP$22$K$03316$, JP$24$KK$0252$, and JP$26$K$06777$. 

\section{Transformation and multiplication rules of combinatorial invariants of Delzant polytopes}\label{sec:Prelim}
\subsection{Notation and preliminary material}
Let $M$ be a free abelian group of rank $n$, so that $M\cong \Z^n$.
Let $\D$ be an $n$-dimensional lattice Delzant polytope in $M_\R:=M\otimes_\Z \R\cong \R^n$.
Denote by $E_\D(t)$ the Ehrhart polynomial of $\D$, which admits the expansion 
\begin{equation}\label{eq:Ehrhart}
E_\D(t)=\Vol(\D)t^n+\frac{\Vol(\p \Delta)}{2}t^{n-1}+O(t^{n-2}) \qquad \text{and} \qquad
E_\D(i)=\#(i\D\cap \Z^n)=\#(\D\cap (\Z/i)^n)
\end{equation}
for every positive integer $i$, where $\p\D$ denotes the boundary of $\D$. In particular, $E_\D(i)$ counts the number of lattice points in the $i$-th dilation of $\D$.

Analogously to the Ehrhart polynomial, it is known that there exists an $\R^n$-valued polynomial satisfying (with asymptotic expansion)
\begin{align}
\begin{split}\label{def:SumPoly}
\bs s_\D(i)&:=\sum_{\bs a\in \D \cap(\Z/i)^n}\bs a=\frac{1}{i}\sum_{\bs a\in i\D\cap \Z^n}\bs a\\
&=i^n\int_\D \bs x\, dv+\frac{i^{n-1}}{2}\int_{\p \D}\bs x\, d\sigma+\dots +\bs s_\D
\end{split}
\end{align}
for every positive integer $i$. Here $d\sigma$ denotes the $(n-1)$-dimensional Lebesgue measure on $\p\D$, defined as follows.
Let $h_j(\bs x)=\braket{\bs x, \bs v_j}+a_j$ be the defining affine function of the facet 
\[
F_j=\set{\bs x\in \D| h_j(\bs x)=0}.
\] 
Let $dv=dx_1\wedge \dots \wedge dx_n$ be the standard volume form on $\RR^n$. On each facet $F_j\subset \partial \Delta$, 
the induced $(n-1)$-dimensional Lebesgue measure $d\sigma_j=d\sigma|_{F_j}$ is defined by
\[
dv=\pm d\sigma_j\wedge dh_j.
\]
We call $\bs s_\D(t)$ the {\emph{lattice points sum polynomial}}.

\subsection{Chow weights}
Let $(X,L)$ be an $n$-dimensional polarized variety.
Recall that a \emph{test configuration} for $(X,L)$ consists of a polarized scheme $(\mathscr X,\mathscr L)$ together with the following data:
\begin{itemize}
\item a $\C^\times$-action and a proper flat morphism $\pi:\mathscr X\to \C$ that is $\C^\times$-equivariant
with respect to the standard action on $\C$;
\item a $\C^\times$-equivariant line bundle $\mathscr L\to \mathscr X$ that is ample over the fibers $\mathscr X_z:=\pi^{-1}(z)$, such that $(\mathscr X_z,  \mathscr L_z)\cong (X,L)$ for $z\neq 0$, where $ \mathscr L_z:= \restrict{\mathscr L}{\mathscr X_z}$.  
\end{itemize}
For any positive integer $i$, let $d_i=\dim H^0(X,L^{\otimes i})$. It was shown in \cite{RT07} that the data of a test configuration for $(X,L^{\otimes i})$ corresponds bijectively to a $\C^\times$-action on $\GL(d_i,\C)$.
For any $r\in \Z_{>0}$, set $k=ri$.
Let $w(k)=w(ri)$ denote the total weight of the induced $\C^\times$-action on $H^0(\mathscr X_0, \mathscr L_0^{\otimes r})$, where $(\mathscr X_0, \mathscr L_0)$ is the central fiber of the test configuration $(\mathscr X, \mathscr L)$. Define the normalized weight 
\[
\widetilde{w}_{i,k}:=w(k)id_i-w(i)kd_k.
\]
For sufficiently large $k$, it is known that $\widetilde{w}_{i,k}$ is a polynomial of degree $n+1$ in $k$, of the form 
\begin{equation}\label{eq:TotalWt}
\widetilde{w}_{i,k}=\sum_{j=1}^{n+1}e_j(i)k^j.
\end{equation}
The leading coefficient $e_{n+1}(i)$  is called the \emph{Chow weight}.

Now suppose that $(X,L)$ is an $n$-dimensional polarized toric variety with the associated polytope $\D$.
Let $f(\bs x)$ be a rational piecewise linear concave function on $\D$.
As shown in \cite[Lemma 4.2]{YZ19}, one can explicitly compute the Chow weight of the toric test configuration for $(X,L^{\otimes i})$ induced by $f$; it is given by $\eqref{def:ChowWt0}$.

Thus, we obtain the following combinatorial characterization of toric Chow weight.

\begin{definition}\label{def:ChWt}\rm
Let $\D\subset M_\R\cong\R^n$ be an $n$-dimensional Delzant lattice polytope, and let $f(\bs x)$ be a concave piecewise linear function on $\D$. For a positive integer $i$, define
\[
P_\D(f;i):=\sum_{\bs a\in \D\cap (\Z/i)^n}f(\bs a).
\]
We define the {\emph{Chow weight}} of $i\D$ with respect to $f$ by
\begin{equation}\label{def:ChowWt}
\Chow_\D(f;i):=\Vol(\D)P_\D(f;i)-E_\D(i) \int_\D f(\bs x) dv.
\end{equation}
\end{definition}
As a special case, if we take $f(\bs x)=\bs x$, the coordinate function, 
then $\eqref{def:ChowWt}$ becomes
\begin{equation}\label{def:ChowWt2}
\Chow_\D(\bs x;i)=\Vol(\D)\bs s_\D(i)-E_\D(i) \int_\D \bs x \,dv,
\end{equation}
which  coincides with {\emph{Ono's obstruction}} for Chow semistability for the polarized toric manifold $(X_\D,\mathcal O_{X_\D}(i))$ introduced in \cite[Theorem $1.2$]{Ono11}.

\begin{remark}\label{rem:ChowWt}\rm
\begin{enumerate}
\item For any $\mathbb{R}^n$-valued linear function $f$, $\Chow_\D(f;i)$ is an $\R^n$-valued polynomial in $i$ of degree $n-1$.
\item If the polarized toric manifold $(X_\D,L_\D)$ with $L_\D=\mathcal O_{X_\D}(1)$ is {\emph{asymptotically Chow semistable}}, then 
\[
\Chow_\D(f; i)\equiv \bs 0
\] 
for every positive integer $i$ and every affine linear function $f(\bs x)$.
\item As mentioned in \cite[$(1.3)$]{LY24}, the coefficients of $\Chow_\D(f;i)$ in $\eqref{def:ChowWt2}$ span the same linear space as
$\Set{\restrict{\mathcal F_{\mathrm{Td}^p}}{{\C^n}}: ~ p=1, \dots , n}$, where $\mathcal F_{\mathrm{Td}^p}$ denotes the family of integral invariants introduced in \cite{Fut04}.
\end{enumerate}
\end{remark}
In the following proposition, we assume that $f(\bs x)=A\bs x$ with $A\in \GL_n(\R)$, is a linear function on $\D$ with no constant term.
Then the Chow weights defined in $\eqref{def:ChowWt}$ satisfy the following natural transformation properties under affine transformations of $\D$.
\begin{proposition}\label{prop:AffineTrans}
Let $\D$ be an $n$-dimensional Delzant lattice polytope in $\R^n$. Let $i$ be any positive integer, and let $f(\bs x)=A\bs x$. 
\begin{enumerate}
\item ({\bf{Translation invariance}}) For any $\bs c\in \Z^n$, under the translation $\D+\bs c$, we have
\[
\Chow_{\D+\bs c}(f;i)=\Chow_\D(f;i).
\]
\item ({\bf{Unimodular transformation}}) For any $A'\in SL_n(\Z)$, under the transformation $A'\Delta$, we have
\[
\Chow_{A'\D}(f;i)=A'\Chow_\D(f;i).
\]
\item ({\bf{Scaling property}}) For any $m\in \Z_{>0}$, under the dilation $m\D$, we have
\[
\Chow_{m\D}(f;i)=m^{n+1}\Chow_\D(f;mi).
\]
\end{enumerate}
\end{proposition}
\begin{proof}
(1). For $\bs c\in\Z^n$, we observe that
\begin{align*}
E_{\D+\bs c}(i)&=E_\D(i), \qquad \Vol(\D+\bs c)=\Vol(\D), \\
P_{\D+\bs c}(f;i)&=\sum_{\bs a-\bs c \in \D \cap(\Z/i)^n}f(\bs a) =\sum_{\bs b \in \D \cap(\Z/i)^n}f(\bs b+\bs c)\\
&=\sum_{\bs b \in \D \cap(\Z/i)^n}f(\bs b)+\sum_{\bs c  \in \D \cap(\Z/i)^n}\bs c =P_\D(f;i)+E_\D(i)\bs c, \qquad \text{and}\\
\int_{\D+\bs c} f(\bs x) dv&=\int_\D f(\bs x) dv+\bs c\int_\D1\,dv=\int_\D f(\bs x) dv+\Vol(\D)\bs c.
\end{align*}
These yield 
\begin{align*}
\Chow_{\D+\bs c}(f;i)&=\Vol(\D+\bs c)\biggl(P_\D(f;i)+E_\D(i)\bs c\, \biggr)-E_{\D+\bs c}(i)\Set{\int_\D f(\bs x) dv+\Vol(\D)\bs c}\\
&=\Vol(\D)P_\D(f;i)-E_\D(i) \int_\D f(\bs x) dv=\Chow_\D(f;i).
\end{align*}

\noindent (2). For $A'\in SL_n(\Z)$, we see that
\begin{align*}
E_{A'\D}(i)&=\#(i(A'\D)\cap \Z^n)=\#(A'(i\D\cap \Z^n))=E_\D(i),\\
\Vol(A'\D)&=\norm{\det A'}\cdot \Vol(\D)=\Vol(\D),\\
P_{A'\D}(f;i)&=\sum_{\bs a \in A'\D\cap (\Z/i)^n}f(\bs a)=\sum_{\bs a \in A'(\D\cap (\Z/i)^n)}f(\bs a)=A'\sum_{\bs a \in \D\cap (\Z/i)^n}f(\bs a)
=A'\cdot P_\D(f;i),\\
\int_{A'\D}f(\bs x) dv&=\norm{\det A'}\cdot A'\int_\D f(\bs x)dv=A'\int_\D f(\bs x) dv, 
\end{align*} 
because $\norm{\det A'}=1$.
Therefore, we find 
\begin{align*}
\Chow_{A'\D}(f;i)&=\Vol(A'\D)A'\cdot P_\D(f;i)-E_{A'\D}(i)\cdot A'\int_\D f(\bs x)dv
=A'\Chow_\D(f;i).
\end{align*}

\noindent (3). For $m\in \Z_{>0}$, we have
\begin{align*}
E_{m\D}(i)&=\#(i(m\D)\cap \Z^n)=E_\D(mi),  \qquad \Vol(m\D)=m^n\Vol(\D), \\
P_{m\D}(f;i)&=\sum_{\bs a\in m\D\cap (\Z/i)^n}f(\bs a)=\sum_{\bs a\in m(\D\cap (\Z/mi)^n)}f(\bs a)=m\!\!\!\sum_{\bs a\in \D\cap (\Z/mi)^n}f(\bs a)=mP_\D(f;mi),\\
\int_{m\D}f(\bs x)  dv&=\int_\D f(m\bs x)m^ndv=m^{n+1}\int_\D f(\bs x) dv
\end{align*}
by linearity of $f(\bs x)$. Hence, we conclude 
\begin{align*}
\Chow_{m\D}(f;i)&=m^n\Vol(\D)\cdot mP_\D(f;mi)-E_\D(mi)\cdot m^{n+1}\int_\D f(\bs x)dv\\
&=m^{n+1}\Chow_\D(f;mi).
\end{align*}
Thus, the assertions are verified.
\end{proof}

\begin{remark}\rm
We note that for a general affine linear function $f(\bs x)=A\bs x+\bs b$, the Chow weight $\Chow(f;i)$ does not satisfy the properties stated in Proposition \ref{prop:AffineTrans}.
For example, the equality $f(m\bs x)=mf(\bs x)$, which is used in the proof of Proposition \ref{prop:AffineTrans} (3), fails when
$f(\bs x)=A\bs x+\bs b$. Moreover, Proposition \ref{prop:F1} below shows that $P_\D(f;i)$ does not admit a satisfactory asymptotic expression for a general affine function, whereas the situation behaves much better in the special vector valued case $f(\bs x)=\bs x$.
\end{remark}
\begin{proposition}\label{prop:F1}
Let
\[
\D=\conv\Set{
\begin{pmatrix}
0 \\ 0
\end{pmatrix},
\begin{pmatrix}
2 \\ 0
\end{pmatrix},
\begin{pmatrix}
1 \\ 1
\end{pmatrix},
\begin{pmatrix}
0 \\ 1
\end{pmatrix}
},
\]
and let $f(\bs x)=A \bs x+\bs b$ be an affine function on $\D$, where $A=\begin{pmatrix}
1 & 2 \\
2 & 1
\end{pmatrix}$ and $\bs b=\begin{pmatrix}
1 \\ 1
\end{pmatrix}$. Then $P_\D(f;i)$ cannot be expressed in the form 
\begin{equation}\label{eq:EM}
P_\D(f;i)=i^2\int_\D f(\bs x)dv+\frac{i}{2}\int_{\p\D}f(\bs x)d\sigma+
\begin{pmatrix}
c_1 \\ c_2
\end{pmatrix}.
\end{equation}
\end{proposition}
\begin{proof}
Suppose that $P_\D(f;i)$ admits an expression of the form $\eqref{eq:EM}$.
By direct computation, we obtain
\begin{align*}\label{eq:values}
\int_\D f(\bs x)dv&=
\begin{pmatrix}
4 \\ \frac{9}{2}
\end{pmatrix}, \quad 
\int_{\partial \D} f(\bs x)d\sigma=
\begin{pmatrix}
13 \\ 15
\end{pmatrix}, 
\quad
\sum_{\bs a \in \D \cap \Z^2}f(\bs a)=
\begin{pmatrix}
13 \\ 15
\end{pmatrix}.
\end{align*}
Substituting these values into $\eqref{eq:EM}$ for $i=1$, we obtain $(c_1,c_2)=\left( \frac{5}{2}, 3\right)$.
Consequently, the right hand side of $\eqref{eq:EM}$ yields
$\left( \frac{63}{2}, 36\right)$ when $i=2$. 

However, direct computation shows that
\[
P_\D(f;2)=\frac{1}{2}\sum_{\bs a \in 2\D\cap \Z^2}f(\bs a)=\left( \frac{51}{2}, 30\right).
\]
This contradiction proves the proposition.
\end{proof}

\section{Blow-up formula of Chow weights of projective toric manifolds}\label{sec:BlowUp}
In this section, we derive an explicit formula for the Chow weights of the blow-ups of polarized toric manifolds.
Throughout Sections \ref{sec:BlowUp}--\ref{sec:App}, we fix a positive integer $i\in \Z_{>0}$, and regard it as the polarization parameter of $X_\D$, namely $(X_\D, \mathcal O_{X_\D}(i))$.
\subsection{Setting}\label{sec:SetUp}
Let $\D\subset M_\R\cong \R^n$ be an $n$-dimensional Delzant lattice polytope.
Suppose that $\D$ admits a decomposition 
\begin{equation}\label{eq:decomposition}
\D=\D'\cup \smallcup_{a=1}^\ell D_a
\end{equation}
satisfying the following properties:
\begin{itemize}
\item There exists $k\in \Z_{>0}$ such that $k\D'$ is an $n$-dimensional Delzant lattice polytope. We further assume that $k$ is the {\emph{minimal}} positive integer with this property. 
\item For each $a=1, \dots , \ell$, the set $D_a$ is an $n$-dimensional simplex containing a unique vertex of $\D$, which we denote by $\bs p_a$.
\item For distinct $a,b =1, \dots , \ell$, we have
\[
D_a\cap \D'=\p D_a\cap \D'=:L_a, \qquad \qquad D_a\cap D_b=\emptyset. 
\]
\end{itemize}
From the viewpoint of symplectic toric geometry, 
blowing up a toric symplectic manifold $X_\D$ at torus-fixed points corresponds to the operation of chopping off corners of the Delzant polytope $\D$. Consequently, the decomposition $\eqref{eq:decomposition}$ induces a toric morphism
\begin{equation}\label{eq:symp-blow-up}
X_{\D'}:=\mathrm{Bl}_{(p_1,\dots, p_\ell)}(X_\D)\dasharrow X_\D,
\end{equation}
where $p_1, \dots, p_\ell$ are the torus-fixed points of $X_\D$ corresponding to the vertices $\bs p_1,\dots, \bs p_\ell$ of $\D$.

\subsection{The Arbitrary-Dimensional Case}
We first establish a combinatorial description of the blow-up formula for Chow weights associated with the morphism $\eqref{eq:symp-blow-up}$ using the decomposition $\eqref{eq:decomposition}$.
For a function $f$ defined on $\D$ and a positive integer $i\in \Z_{>0}$, define
\[
P_{kL_a}(f;i):=\!\!\! \sum_{\bs a\in kL_a \cap (\Z/i)^n}f(\bs a), \qquad E_{kL_a}(i):=\#(kL_a \cap (\Z/i)^n).
\]
Using the decomposition $\eqref{eq:decomposition}$, the combinatorial invariants introduced in Section \ref{sec:Prelim} satisfy the following relations:
\begin{align}
\begin{split}\label{eq:Invariants}
\Vol(k\D)&=\Vol(k\D')+\sum_{a=1}^\ell\Vol(kD_a), \qquad \int_{k\D}f(\bs x) dv=\int_{k\D'}f(\bs x)dv+\sum_{a=1}^\ell\int_{kD_a}f(\bs x) dv, \\
P_{k\D}(f;i)&=P_{k\D'}(f;i)+\sum_{a=1}^\ell(P_{kD_a}(f;i)-P_{kL_a}(f;i)), \qquad \text{and}\\ 
E_{k\D}(i)&=E_{k\D'}(i)+\sum_{a=1}^\ell(E_{kD_a}(i)- E_{kL_a}(i)). 
\end{split}
\end{align} 
The following proposition gives the blow-up formula for Chow weights associated with the toric morphism $X_{\D'}\dasharrow X_\D$ in arbitrary dimension.
\begin{proposition}\label{prop:BlowUp}
Using the notation introduced in Section \ref{sec:SetUp}, we have
\begin{align*}
\Chow_{k\D'}(f;i)
&=\Chow_{k\D}(f;i)  \\
&-\left( \sum_{a=1}^\ell\Vol(kD_a) \right)P_{k\D}(f;i)-\left( \Vol(k\D)-\sum_{b=1}^\ell\Vol(kD_b) \right)\sum_{a=1}^\ell(P_{kD_a}(f;i)-P_{kL_a}(f;i))\\
&+E_{k\D}(i)\sum_{a=1}^\ell\int_{kD_a}f(\bs x) dv +\sum_{b=1}^\ell(E_{kD_b}(i)- E_{kL_b}(i))\left(  \int_{k\D}f(\bs x) dv -\sum_{a=1}^\ell\int_{kD_a}f(\bs x) dv \right).
\end{align*} 
\end{proposition}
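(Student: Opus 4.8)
The plan is to substitute the additivity relations \eqref{eq:Invariants} directly into the definition \eqref{def:ChowWt} of the Chow weight applied to $k\D'$, and then bookkeep the resulting terms so that the "unperturbed" piece $\Chow_{k\D}(f;i)$ is extracted and everything else is collected into the correction terms. Concretely, from \eqref{eq:Invariants} we have
\[
\Vol(k\D')=\Vol(k\D)-\sum_{a=1}^\ell\Vol(kD_a), \qquad E_{k\D'}(i)=E_{k\D}(i)-\sum_{a=1}^\ell\bigl(E_{kD_a}(i)-E_{kL_a}(i)\bigr),
\]
and similarly $P_{k\D'}(f;i)=P_{k\D}(f;i)-\sum_{a=1}^\ell\bigl(P_{kD_a}(f;i)-P_{kL_a}(f;i)\bigr)$ and $\int_{k\D'}f\,dv=\int_{k\D}f\,dv-\sum_{a=1}^\ell\int_{kD_a}f\,dv$. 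Writing $V:=\Vol(k\D)$, $S:=\sum_a\Vol(kD_a)$, $P:=P_{k\D}(f;i)$, $Q:=\sum_a(P_{kD_a}(f;i)-P_{kL_a}(f;i))$, $E:=E_{k\D}(i)$, $R:=\sum_a(E_{kD_a}(i)-E_{kL_a}(i))$, $I:=\int_{k\D}f\,dv$, and $J:=\sum_a\int_{kD_a}f\,dv$, the claim is simply the algebraic identity
\[
(V-S)(P-Q)-(E-R)(I-J)=\bigl(VP-EI\bigr)-S\,P-(V-S)\,Q+E\,J+R\,(I-J),
\]
which one verifies by expanding both sides.

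The steps, in order, are: (i) recall $\Chow_{k\D'}(f;i)=\Vol(k\D')P_{k\D'}(f;i)-E_{k\D'}(i)\int_{k\D'}f\,dv$ from \eqref{def:ChowWt}; (ii) substitute the four additivity identities from \eqref{eq:Invariants}; (iii) expand the product $(V-S)(P-Q)$ as $VP - VQ - SP + SQ$ and the product $(E-R)(I-J)$ as $EI - EJ - RI + RJ$; (iv) regroup: $VP - EI = \Chow_{k\D}(f;i)$; the terms $-SP$ and $+EJ$ appear directly; the remaining terms $-VQ + SQ = -(V-S)Q$ give the second correction; and $RI - RJ = R(I-J)$ gives the last correction. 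Matching $V-S$, $S$, $R$, $Q$, etc., back to their definitions recovers exactly the displayed formula in Proposition~\ref{prop:BlowUp}. The one genuinely non-trivial input is the additivity \eqref{eq:Invariants} itself, and in particular the inclusion–exclusion bookkeeping for $P_{k\D}$ and $E_{k\D}$: since $\D = \D' \cup \bigcup_a D_a$ with $D_a \cap \D' = L_a$ and $D_a \cap D_b = \emptyset$, a lattice point (or value of $f$ summed over lattice points) in $k\D$ is counted once in $k\D'$ plus once in each $kD_a$, with the overlap on each $kL_a$ subtracted off exactly once — this is where the hypothesis that the $D_a$ are pairwise disjoint and meet $\D'$ only along $L_a$ is used.

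The main obstacle is not the algebra — which is routine linear expansion — but making sure the set-theoretic decomposition genuinely produces the stated additivity without hidden double-counting, i.e. that $k\D'$, $kD_a$, and $kL_a$ are all honest lattice polytopes (or lattice polytopes up to the scaling by $k$ chosen in Section~\ref{sec:SetUp}) so that $P$ and $E$ are well-defined polynomials on each piece and the sums telescope correctly. Granting \eqref{eq:Invariants}, which is asserted in the excerpt, the proof is a two-line substitution followed by collecting terms, so I would present it compactly: state the four substitutions, expand, and identify the grouped pieces with the right-hand side of the proposition.
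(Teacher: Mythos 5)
Your proposal is correct and follows exactly the paper's own argument: substitute the additivity relations \eqref{eq:Invariants} into the definition \eqref{def:ChowWt} of $\Chow_{k\D'}(f;i)$, expand the two products, and regroup so that $\Vol(k\D)P_{k\D}(f;i)-E_{k\D}(i)\int_{k\D}f\,dv$ is identified as $\Chow_{k\D}(f;i)$ and the remaining terms match the stated correction. The algebraic identity you display checks out term by term, so nothing is missing.
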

\begin{proof}
Plugging each value of $\eqref{eq:Invariants}$ into $\eqref{def:ChowWt}$, we find that
\begin{align*}
\Chow_{k\D'}(f;i)&=\Vol(k\D')P_{k\D'}(f;i)-E_{k\D'}(i)\int_{k\D'}f(\bs x)dv\\
&=\left( \Vol(k\D)-\sum_{a=1}^\ell\Vol(kD_a) \right)\Set{P_{k\D}(f;i) -\sum_{a=1}^\ell(P_{kD_a}(f;i)-P_{kL_a}(f;i))}\\
&-\Set{E_{k\D}(i)-\sum_{a=1}^\ell(E_{kD_a}(i)- E_{kL_a}(i))}\left(  \int_{k\D}f(\bs x) dv -\sum_{a=1}^\ell\int_{kD_a}f(\bs x) dv \right) \\
&=\Chow_{k\D}(i) \\
&-\left( \sum_{a=1}^\ell\Vol(kD_a) \right)P_{k\D}(f;i)-\left( \Vol(k\D)-\sum_{b=1}^\ell\Vol(kD_b) \right)\sum_{a=1}^\ell(P_{kD_a}(f;i)-P_{kL_a}(f;i))\\
&+E_{k\D}(i)\sum_{a=1}^\ell\int_{kD_a}f(\bs x) dv +\sum_{b=1}^\ell(E_{kD_b}(i)- E_{kL_b}(i))\left(  \int_{k\D}f(\bs x) dv -\sum_{a=1}^\ell\int_{kD_a}f(\bs x) dv \right).
\end{align*} 
This completes the proof.
\end{proof}

\subsection{The Two-Dimensional Case}\label{sec:ToricSurf}
We now specialize to the case where $\Delta\subset \R^2$ is a two-dimensional Delzant polygon.
In this case, the Ehrhart polynomial introduced in $\eqref{eq:Ehrhart}$ is given by Pick's theorem:
\[
E_\D(i)=\Vol(\D)i^2+\frac{\#(\partial \D \cap \Z^2)}{2}i+1.
\]
For simplicity, throughout this subsection we only consider the vector-valued function $f(\bs x)=\bs x$.

Each two-dimensional simplex $D_a$ appearing in the decomposition $\eqref{eq:decomposition}$ can be described explicitly as follows.

 Let 
$\begin{pmatrix}
\alpha_a \\ \beta_a
\end{pmatrix}$, 
$\begin{pmatrix}
\gamma_a \\ \delta_a
\end{pmatrix} \in \Z^2$ be the primitive generators of the two edges of $\D$ emanating from the vertex $\bs p_a$. 
We orient them so that 
\begin{equation}\label{eq:Matrix}
\det A_a=1, \qquad A_a:=
\begin{pmatrix}
\alpha_a & \gamma_a \\
\beta_a & \delta_a
\end{pmatrix},
\end{equation}
which ensures that $\begin{pmatrix}
\alpha_a \\ \beta_a
\end{pmatrix}$ and
$\begin{pmatrix}
\gamma_a \\ \delta_a
\end{pmatrix}$ form an oriented basis compatible with the Delzant condition.

Since $k\D'$ is a Delzant lattice polygon by assumption (see, Section \ref{sec:SetUp}), there exists a positive integer $m_a\in \Z_{>0}$ such that
the chopped-off simplex $D_a$ can be written as
\begin{align*} 
D_a&=\D(\bs p_a,\bs q_a, \bs r_a):=\mathrm{conv}\set{\bs p_a,\bs q_a, \bs r_a}, 
\end{align*}
where
\[
\bs q_a=\bs p_a+\frac{m_a}{k}
\begin{pmatrix}
\alpha_a \\ \beta_a 
\end{pmatrix},
 \qquad  
\bs r_a=\bs p_a+\frac{m_a}{k}
\begin{pmatrix}
\gamma_a \\ \delta_a 
\end{pmatrix}.
\]
Let $\D_{m_a}\subset \R^2$ be the standard right triangle
\[
\mathrm{conv}\Set{\begin{pmatrix}
0 \\ 0
\end{pmatrix},
\begin{pmatrix}
m_a \\ 0
\end{pmatrix},
\begin{pmatrix}
0 \\ m_a
\end{pmatrix}
}.
\]
Then $D_a$ is obtained from $\D_{m_a}$ by translation and a unimodular transformation $A_a\in \mathrm{SL}_2(\Z)$, namely, 
\begin{equation}\label{eq:Simplex}
kD_a=A_a\D_{m_a}+k\bs p_a.
\end{equation}
Let $l_{m_a}$ denote the edge of $\D_{m_a}$ joining 
$\begin{pmatrix}
m_a \\ 0
\end{pmatrix}$ and $\begin{pmatrix}
0 \\ m_a
\end{pmatrix}$.
Straightforward computations yield the following identities.
\begin{claim}\label{claim:Rect}
\begin{align}
\Vol(\D_{m_a})&=\frac{m_a^2}{2}, \notag \\ \label{eq:SumPoly2}
\bs s_{\D_{m_a}}(i)-\bs s_{l_{m_a}}(i)&=\frac{m_a}{6}(im_a+1)(im_a-1)
\begin{pmatrix}
1 \\ 1
\end{pmatrix},
\\ \label{eq:Ehrhart2}
E_{\D_{m_a}}(i)-E_{l_{m_a}}(i)&=\frac{im_a}{2}(im_a+1) \qquad \text{ and } \quad \\
\int_{\D_{m_a}}\bs x\, dv &=\frac{m_a^3}{6} \begin{pmatrix}
1 \\ 1
\end{pmatrix}. \notag
\end{align}  
\end{claim}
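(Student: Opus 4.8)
The plan is to verify the four identities in Claim \ref{claim:Rect} directly, since $\D_{m_a}$ is the concrete standard right triangle with legs of length $m_a$ along the coordinate axes, and every quantity is a sum or integral of an explicit polynomial over an explicit region. First I would record the elementary facts: $\D_{m_a}$ has vertices $(0,0)$, $(m_a,0)$, $(0,m_a)$, so $\Vol(\D_{m_a})=\tfrac12 m_a\cdot m_a=\tfrac{m_a^2}{2}$, and $\int_{\D_{m_a}}x_1\,dv=\int_0^{m_a}\!\!\int_0^{m_a-x_1} x_1\,dx_2\,dx_1=\int_0^{m_a}x_1(m_a-x_1)\,dx_1=\tfrac{m_a^3}{6}$, with the $x_2$-component equal by the symmetry $x_1\leftrightarrow x_2$ of the triangle; this gives the first and last displays. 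For the two lattice-point identities I would pass to the dilate $i\D_{m_a}$, which is the right triangle with legs $im_a$, and use the bijection between $\D_{m_a}\cap(\Z/i)^2$ and $i\D_{m_a}\cap\Z^2$ together with the corresponding statement for the segment $l_{m_a}$ versus $i\,l_{m_a}$.

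The combinatorial core is thus to compute, for a positive integer $N=im_a$, the quantities $\#\{(x,y)\in\Z^2_{\ge0}: x+y\le N\}$ and $\sum_{x+y\le N}(x,y)$ over the closed triangle, and subtract the contributions of the hypotenuse line $x+y=N$ (the sets counted by $E_{l_{m_a}}$ and summed by $\bs s_{l_{m_a}}$ after rescaling). The first is the classical $\binom{N+2}{2}=\tfrac{(N+1)(N+2)}{2}$, and removing the $N+1$ points with $x+y=N$ leaves $\tfrac{(N+1)(N+2)}{2}-(N+1)=\tfrac{N(N+1)}{2}$, which upon substituting $N=im_a$ and dividing appropriately (the rescaling in \eqref{def:SumPoly} introduces no factor here since $E$ is a plain count) yields \eqref{eq:Ehrhart2}. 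For the sum, by the $x\leftrightarrow y$ symmetry both components of $\bs s_{\D_{m_a}}(i)-\bs s_{l_{m_a}}(i)$ are equal, so it suffices to compute $S:=\sum_{\substack{x,y\ge0,\ x+y\le N}} x \;-\;\sum_{x+y=N}x$, then divide by $i$ per the definition of $\bs s$. The first sum is $\sum_{x=0}^{N} x\,(N+1-x)$ and the second is $\sum_{x=0}^{N}x=\tfrac{N(N+1)}{2}$; expanding $\sum_{x=0}^N x(N+1-x)=(N+1)\tfrac{N(N+1)}{2}-\tfrac{N(N+1)(2N+1)}{6}$ and subtracting gives, after factoring, $\tfrac{N(N-1)(N+1)}{6}$. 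Substituting $N=im_a$ and dividing by $i$ produces $\tfrac{m_a}{6}(im_a+1)(im_a-1)$ in each slot, which is \eqref{eq:SumPoly2}.

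The only real bookkeeping obstacle is making sure the rescaling conventions match: $\bs s_\D(i)=\tfrac1i\sum_{\bs a\in i\D\cap\Z^n}\bs a$ by \eqref{def:SumPoly}, whereas $E_\D(i)=\#(i\D\cap\Z^n)$ with no prefactor, so the factor-of-$1/i$ appears in the sum-point identity \eqref{eq:SumPoly2} but not in the Ehrhart identity \eqref{eq:Ehrhart2}; and one must check that the ``boundary line'' objects $l_{m_a}$, $E_{l_{m_a}}$, $\bs s_{l_{m_a}}$ are interpreted consistently as the lattice-point count and lattice-point sum along the segment from $(m_a,0)$ to $(0,m_a)$ (dilated by $i$), which is exactly the set $\{x+y=N\}$ used above. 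Once these conventions are pinned down, the four equalities follow from the three elementary summation formulas $\sum_{x=0}^N 1=N+1$, $\sum_{x=0}^N x=\tfrac{N(N+1)}{2}$, $\sum_{x=0}^N x^2=\tfrac{N(N+1)(2N+1)}{6}$ and nothing more, so I expect no genuine difficulty — just careful algebra and attention to the normalization in the definitions.
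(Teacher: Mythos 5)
Your proposal is correct and follows essentially the same route as the paper: direct computation of the volume and barycentric integral, and for the two lattice identities a slice-by-slice count over the dilated triangle $\{x+y\le N\}$, $N=im_a$, reduced to the elementary power sums, with the $x\leftrightarrow y$ symmetry handling the second component and the $1/i$ normalization of $\bs s_\D$ accounted for. The paper merely combines your two steps (counting $N+1-k$ points on each slice and then removing the hypotenuse point) into the single count of $N-k$ points per slice; the algebra is otherwise identical.
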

\begin{proof}
It suffices to evaluate $\eqref{eq:SumPoly2}$ and $\eqref{eq:Ehrhart2}$. Since the triangle $\D_{m_a}$ is symmetric with respect to 
the interchange of the $x$- and $y$-coordinates, 
it is enough to compute only the $x$-component.

For each $k=0,1, \dots, im_a$, there are $im_a-k$ lattice points in $(i\D_{m_a}\setminus il_{m_a})\cap \Z^2$ whose $x$-coordinate equals $k$. Hence the $x$-component of the left-hand side of $\eqref{eq:SumPoly2}$ is 
\begin{align*}
\frac{1}{i}\sum_{k=0}^{im_a}k(im_a-k)&=\frac{1}{i}\left( im_a\sum_{k=0}^{im_a} k- \sum_{k=0}^{im_a} k^2 \right)\\
&=\frac{im_a^2}{2}(im_a+1)-\frac{m_a}{6}(im_a+1)(2im_a+1)\\
&=\frac{m_a}{6}(im_a+1)(3im_a-2im_a-1)=\frac{m_a}{6}(im_a+1)(im_a-1).
\end{align*}
Similarly, $\eqref{eq:Ehrhart2}$ is given by
\[
E_{\D_{m_a}}(i)-E_{l_{m_a}}(i)=\sum_{k=0}^{im_a}(im_a-k)=\sum_{j=1}^{im_a}j=\frac{im_a}{2}(im_a+1).
\]
This proves Claim \ref{claim:Rect}.
\end{proof}
Using Claim \ref{claim:Rect}, $\eqref{eq:Simplex}$ and the transformation rules discussed in Section \ref{sec:Prelim}, we find that
\begin{align}
\begin{split}\label{eq:Invariants2}
\Vol(kD_a)&=\frac{m_a^2}{2},\\
\bs s_{kD_a}(i)-\bs s_{kL_a}(i)&=\frac{m_a}{6}(im_a+1)(im_a-1)
\begin{pmatrix}
\alpha_a+\gamma_a \\
\beta_a+\delta_a
\end{pmatrix}
+\frac{im_a}{2}(im_a+1)k\bs p_a, \\
E_{kD_a}(i)-E_{kL_a}(i)&=\frac{im_a}{2}(im_a+1),\\
\int_{kD_a}\bs x\, dv&=\frac{m_a^3}{6}
\begin{pmatrix}
\alpha_a+\gamma_a \\
\beta_a+\delta_a
\end{pmatrix}
+\frac{m_a^2}{2}k\bs p_a.
\end{split}
\end{align}
Setting
\[
M:=\sum_{a=1}^\ell m_a, \quad \widetilde M:=\sum_{a=1}^\ell m_a^2, \quad  A:=\#(\p(k\D)\cap \Z^2)-M \quad \text{and} \quad B:=2\Vol(k\D)-\widetilde M,
\]
respectively, we define $\mathcal{DF}_{k\D,1}$ and $\mathcal{DF}_{k\D,2}$ by
\begin{align}
\begin{split}\label{eq:DF1}
\mathcal{DF}_{k\D,1}=&\frac{A}{12}\sum_{a=1}^\ell m_a^3
\begin{pmatrix}
\alpha_a+\gamma_a \\
\beta_a+\delta_a
\end{pmatrix}
+\frac{k}{4}\left(A\sum_{a=1}^\ell m_a^2 \bs p_a-B\sum_{a=1}^\ell m_a \bs p_a\right)\\
&\qquad +\frac{M}{2}\int_{k\D}\bs x\, dv-\frac{\widetilde M}{4}\int_{\p(k\D)}\bs x\, d\sigma, \qquad \text{and} 
\end{split}\\  \label{eq:DF2}
\mathcal{DF}_{k\D,2}=&\frac{B}{12}\sum_{a=1}^\ell m_a \begin{pmatrix}
\alpha_a+\gamma_a \\
\beta_a+\delta_a
\end{pmatrix}
+\frac{1}{6}\sum_{a=1}^\ell m_a^3
\begin{pmatrix}
\alpha_a+\gamma_a \\
\beta_a+\delta_a
\end{pmatrix}+\frac{k}{2}\sum_{a=1}^\ell m_a^2 \bs p_a -\frac{\widetilde M}{2}\bs s_{k\D}.
\end{align}
We are now ready to state the main theorem of this paper.
\begin{theorem}\label{thm:BlowUp}
Let $\D\subset \R^2$ be a lattice Delzant polygon admitting a decomposition
\[
\D=\D'\cup \smallcup_{a=1}^\ell D_a
\]
satisfying the assumptions of Section \ref{sec:SetUp}. Let $k\in \Z_{>0}$ be the minimal positive integer such that $k\D'$ is a Delzant lattice polygon,
and let $\varpi:X_{\D'} \dasharrow X_\D$ be the corresponding symplectic toric blow-up 
at the torus-fixed points $p_1,\dots, p_\ell$.
Then, for every positive integer $i\in \Z_{>0}$, 
the Chow weight of $\widetilde{X}:=X_{k\D'}$ with respect to the polarization $\mathcal O_{\widetilde X}(i)$ satisfies
\begin{equation}\label{eq:BlowUp}
\Chow_{k\D'}(\bs x; i)=\Chow_{k\D}(\bs x;i)+i\mathcal{DF}_{k\D,1}+\mathcal{DF}_{k\D,2},
\end{equation}
where $\mathcal{DF}_{k\D,1}$ and $\mathcal{DF}_{k\D,2}$ are the combinatorial invariants defined in $\eqref{eq:DF1}$ and $\eqref{eq:DF2}$, respectively.
\end{theorem}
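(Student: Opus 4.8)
The plan is to specialize Proposition~\ref{prop:BlowUp} to the two-dimensional case $f(\bs x)=\bs x$ and then substitute the explicit values computed in $\eqref{eq:Invariants2}$. Concretely, I would first rewrite the general blow-up formula of Proposition~\ref{prop:BlowUp} by grouping it as $\Chow_{k\D'}(\bs x;i)=\Chow_{k\D}(\bs x;i)+R_1+R_2+R_3+R_4$, where $R_1$ carries the $\bigl(\sum_a \Vol(kD_a)\bigr)\bs s_{k\D}(i)$ term, $R_2$ carries the $\bigl(\Vol(k\D)-\sum_b\Vol(kD_b)\bigr)\sum_a(\bs s_{kD_a}(i)-\bs s_{kL_a}(i))$ term, and $R_3,R_4$ are the two $E$-times-integral terms. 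This identifies the ``remainder'' $i\mathcal{DF}_{k\D,1}+\mathcal{DF}_{k\D,2}$ with a single explicit expression that I must then expand.

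Next I would feed in the four identities of $\eqref{eq:Invariants2}$: $\Vol(kD_a)=m_a^2/2$ (so $\sum_a\Vol(kD_a)=\widetilde M/2$ and $\Vol(k\D)-\sum_b\Vol(kD_b)=(2\Vol(k\D)-\widetilde M)/2=B/2$), the expression for $\bs s_{kD_a}(i)-\bs s_{kL_a}(i)$, for $E_{kD_a}(i)-E_{kL_a}(i)=\tfrac{im_a}{2}(im_a+1)$ (so $\sum_a$ of this is $\tfrac{i}{2}(iM+\text{something})$; more precisely $\tfrac{i^2}{2}\widetilde M+\tfrac{i}{2}M$), and for $\int_{kD_a}\bs x\,dv$. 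I would also use the two-dimensional Ehrhart expansion $E_{k\D}(i)=\Vol(k\D)i^2+\tfrac{1}{2}\#(\p(k\D)\cap\Z^2)i+1$, the sum-points expansion $\bs s_{k\D}(i)=i^2\int_{k\D}\bs x\,dv+\tfrac{i}{2}\int_{\p(k\D)}\bs x\,d\sigma+\bs s_{k\D}$, together with the defining relations $A=\#(\p(k\D)\cap\Z^2)-M$ and $B=2\Vol(k\D)-\widetilde M$ to eliminate $\Vol(k\D)$ and $\#(\p(k\D)\cap\Z^2)$ in favor of $A$, $B$, $M$, $\widetilde M$ wherever it simplifies the bookkeeping.

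Having substituted everything, the remainder becomes a polynomial in $i$ whose vector coefficients involve $\sum_a m_a^3\binom{\alpha_a+\gamma_a}{\beta_a+\delta_a}$, $\sum_a m_a\binom{\alpha_a+\gamma_a}{\beta_a+\delta_a}$, $k\sum_a m_a^2\bs p_a$, $k\sum_a m_a\bs p_a$, $\int_{k\D}\bs x\,dv$, $\int_{\p(k\D)}\bs x\,d\sigma$, and $\bs s_{k\D}$. The final step is to collect by powers of $i$. A priori the remainder is a cubic in $i$; the content of the theorem is that the $i^3$ and any higher-order discrepancies cancel, leaving only the $i^1$ part (which must match $\mathcal{DF}_{k\D,1}$) and the $i^0$ part (which must match $\mathcal{DF}_{k\D,2}$). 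I would verify the $i^3$-coefficient vanishes first as a sanity check: it should read something like $\tfrac{\widetilde M}{2}\cdot i^2\int_{k\D}\bs x\,dv$-type terms against $\tfrac{i^2}{2}\widetilde M\int_{k\D}\bs x\,dv$-type terms with opposite sign, and similarly the $i^2$-coefficient should collapse — consistent with Remark~\ref{rem:ChowWt}(1), which forces $\Chow_{k\D'}(\bs x;i)$ and $\Chow_{k\D}(\bs x;i)$ to both be degree $\le n-1=1$ in $i$, hence their difference is degree $\le 1$. I expect the main obstacle to be purely organizational: carefully tracking the several bilinear cross terms (e.g.\ $E_{k\D}(i)$ against $\sum_a\int_{kD_a}\bs x\,dv$, which itself splits into an $\binom{\alpha_a+\gamma_a}{\beta_a+\delta_a}$-part and a $k\bs p_a$-part) and confirming that the surviving linear and constant pieces regroup exactly into the forms $\eqref{eq:DF1}$ and $\eqref{eq:DF2}$ — in particular that the coefficient $\tfrac{A}{12}$ in front of $\sum_a m_a^3\binom{\alpha_a+\gamma_a}{\beta_a+\delta_a}$ and the combination $\tfrac{k}{4}\bigl(A\sum_a m_a^2\bs p_a-B\sum_a m_a\bs p_a\bigr)$ emerge correctly. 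Once the cancellation of top-degree terms is checked, the rest is a matter of matching coefficients, and the identity $\eqref{eq:BlowUp}$ follows.
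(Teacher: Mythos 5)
Your proposal follows essentially the same route as the paper's own proof: it specializes Proposition~\ref{prop:BlowUp} to $n=2$ with $f(\bs x)=\bs x$, substitutes the explicit quantities from $\eqref{eq:Invariants2}$ together with the two-dimensional Ehrhart and lattice-sum expansions, and collects powers of $i$ so that the top-degree terms cancel (as forced by Remark~\ref{rem:ChowWt}(1)) and the linear and constant parts regroup into $\eqref{eq:DF1}$ and $\eqref{eq:DF2}$. The only quibble is that the a priori remainder is quadratic rather than cubic in $i$, but this does not affect the argument.
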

\begin{proof}
Plugging each value in $\eqref{eq:Invariants2}$ into Proposition \ref{prop:BlowUp}, we find that
\begin{align}
\begin{split}\label{eq:ChowWt2}
\Chow_{k\D'}(\bs x; i)&=\Chow_{k\D}(\bs x; i)-\left(\sum_{a=1}^\ell \frac{m_a^2}{2}\right)\bs s_{k\D}(i)\\
&-\left(\Vol(k\D)-\sum_{b=1}^\ell\frac{m_b^2}{2} \right)\sum_{a=1}^\ell
\left( \frac{m_a}{6}(im_a+1)(im_a-1)
\begin{pmatrix}
\alpha_a+\gamma_a \\
\beta_a+\delta_a
\end{pmatrix}
+\frac{im_a}{2}(im_a+1)k\bs p_a
\right)\\
&+E_{k\D}(i)\sum_{a=1}^\ell
\Set{\frac{m_a^3}{6}
\begin{pmatrix}
\alpha_a+\gamma_a \\
\beta_a+\delta_a
\end{pmatrix}
+\frac{m_a^2}{2}k\bs p_a}\\
&+\sum_{b=1}^\ell\left(\frac{im_b}{2}(im_b+1)
\right)\left(
\int_{k\D}\bs x\, dv- \sum_{a=1}^\ell
\Set{\frac{m_a^3}{6}
\begin{pmatrix}
\alpha_a+\gamma_a \\
\beta_a+\delta_a
\end{pmatrix}
+\frac{m_a^2}{2}k\bs p_a}
\right).
\end{split}
\end{align}
In $\eqref{eq:ChowWt2}$, the last four terms must be simplified as $c_1i+c_0$ with appropriate coefficients $c_0$ and $c_1$ with respect to $i$ (see, Remark \ref{rem:ChowWt} (1)).
Hence, we further proceed our computations.

\vskip 7pt

\noindent $\bullet$ The coefficient of $\begin{pmatrix}
\alpha_a+\gamma_a \\
\beta_a+\delta_a
\end{pmatrix}$:  
\begin{align*}
\sum_{a=1}^\ell&\left[
-\frac{1}{6}\left(\Vol(k\D)-\sum_{b=1}^\ell\frac{m_b^2}{2} \right)m_a(im_a+1)(im_a-1)+\frac{E_{k\D}(i)m_a^3}{6} \right. \\
 &\left. \hspace{7cm} -\frac{m_a^3}{12}\sum_{b=1}^\ell im_b(im_b+1)  \right]
 \begin{pmatrix}
\alpha_a+\gamma_a \\
\beta_a+\delta_a
\end{pmatrix}\\
&=\sum_{a=1}^\ell\left[
-\frac{1}{6}\left(\Vol(k\D)-\sum_{b=1}^\ell\frac{m_b^2}{2} \right)(m_a^3i^2-m_a)+\left( \Vol(k\D)i^2+\frac{\#(\p(k\D)\cap \Z^2)}{2}i+1\right)\frac{m_a^3}{6} \right. \\
&\left. \hspace{7cm}  -\frac{m_a^3}{12}\left(i^2 \sum_{b=1}^\ell m_b^2+i \sum_{b=1}^\ell m_b \right) \right]
 \begin{pmatrix}
\alpha_a+\gamma_a \\
\beta_a+\delta_a
\end{pmatrix}\\
&=\sum_{a=1}^\ell\left[
\frac{m_a^3}{12}\left(\#(\p(k\D)\cap \Z^2)-\sum_{b=1}^\ell m_b \right)i +\frac{m_a}{12}\left(2\Vol(k\D)+2m_a^2-\sum_{b=1}^\ell m_b^2\right)
\right] \begin{pmatrix}
\alpha_a+\gamma_a \\
\beta_a+\delta_a
\end{pmatrix}.
\end{align*}

\vskip 7pt

\noindent $\bullet$ The coefficient of $k\bs p_a$:  
\begin{align*}
&\sum_{a=1}^\ell\left[
-\frac{1}{2}\left(\Vol(k\D)-\sum_{b=1}^\ell \frac{m_b^2}{2}
\right)im_a(im_a+1)+E_{k\D}(i)\frac{m_a^2}{2}-\frac{1}{4}\sum_{b=1}^\ell im_b(im_b+1)m_a^2\right]k\bs p_a\\
=&\sum_{a=1}^\ell\left[
-\frac{1}{2}\left(\Vol(k\D)-\sum_{b=1}^\ell \frac{m_b^2}{2}\right)
(m_a^2i^2+m_ai)+\frac{1}{2}\left( \Vol(k\D)i^2+\frac{\#(\p(k\D)\cap \Z^2)}{2}i+1\right)m_a^2 \right.\\
&\left. \hspace{10cm}  -\frac{m_a^2}{4}  \left(i^2 \sum_{b=1}^\ell m_b^2+i\sum_{b=1}^\ell m_b \right) \right]k\bs p_a\\
=&\sum_{a=1}^\ell \left[ \frac{m_a}{4}\left( -2\Vol(k\D)+\sum_{b=1}^\ell m_b^2+\#(\p(k\D)\cap \Z^2)m_a-m_a\sum_{b=1}^\ell m_b
\right)i+\frac{m_a^2}{2}\right]k\bs p_a.
\end{align*}

\vskip 7pt

\noindent $\bullet$ The remaining terms:  
\begin{align*}
&-\frac{1}{2}\left(\sum_{a=1}^\ell m_a^2\right) \bs s_{k\D}(i)+\frac{1}{2}\sum_{a=1}^\ell (m_a^2i^2+m_ai)\int_{k\D} \bs x\, dv\\
=&\frac{1}{2}\sum_{a=1}^\ell \Set{-m_a^2 \left(i^2\int_{k\D} \bs x\,dv +\frac{i}{2}\int_{\p(k\D)} \bs x\,d\sigma+ \bs s_{k\D}\right)
+(m_a^2i^2+m_ai)\int_{k\D}\bs x\, dv} \\
=&\frac{1}{4}\sum_{a=1}^\ell\Set{m_a \left(-m_a \int_{\p(k\D)} \bs x\,d\sigma+2\int_{k\D} \bs x\,dv \right)i-2m_a^2\bs s_{k\D}}.
\end{align*}

Plugging these values into $\eqref{eq:ChowWt2}$, we conclude that
\begin{align*}
\Chow_{k\D'}(i)=&\Chow_{k\D}(i)\\
&+i\sum_{a=1}^\ell\frac{m_a}{4}\left[\frac{m_a^2}{3}\left(\#(\p(k\D)\cap \Z^2)-\sum_{b=1}^\ell m_b\right)
\begin{pmatrix}
\alpha_a+\gamma_a \\
\beta_a+\delta_a
\end{pmatrix}  \right. \\
+&\left(-2\Vol(k\D)+\sum_{b=1}^\ell m_b^2+\#(\p(k\D)\cap\Z^2)m_a-m_a\sum_{b=1}^\ell m_b\right)k\bs p_a\\
-&\left. m_a\int_{\p(k\D)}\bs x\, d\sigma+2\int_{k\D}\bs x\,dv \right] \\
+&\sum_{a=1}^\ell \frac{m_a}{12}\left[\left( 2\Vol(k\D)+2m_a^2-\sum_{b=1}^\ell m_b^2 \right)
\begin{pmatrix}
\alpha_a+\gamma_a \\
\beta_a+\delta_a
\end{pmatrix}+6m_a(k\bs p_a-\bs s_{k\D}) \right]\\
=&\Chow_{k\D}(i)+i\, \mathcal{DF}_{k\D,1}+ \mathcal{DF}_{k\D,2}.
\end{align*}
Hence, the assertion is verified.
\end{proof}

\section{Application to toric surfaces}\label{sec:App}
For any Delzant lattice polygon $\D\subset \R^2$, we observe that
\begin{align}
\begin{split}\label{eq:Ehr_sum}
E_\D(i)&=\Vol(\D)i^2+(E_\D(1)-\Vol(\D)-1)i+1, \quad \text{and} \\
\bs s_\D(i)&=i^2\int_\D\bs x\, dv+i\left( \bs s_\D(2)-\bs s_\D(1)-3\int_\D \bs x\, dv \right)+2\int_\D \bs x\, dv-\bs s_\D(2)+2\bs s_\D(1).
\end{split}
\end{align}
In this section, we apply the blow-up formula established in Theorem \ref{thm:BlowUp} to the projective plane.

\subsection{The blow-up of $\C P^2$ at three distinct points}\label{sec:3pts}
Let $X$ denote the blow-up of $\C P^2$ at the three torus-fixed points 
\[
p_1=[1:0:0], \quad p_2=[0:1:0], \quad  p_3=[0:0:1].
\] 
It is well-known that $(X,\mathcal O_X(-K_X))$ is a smooth symmetric K\"ahler-Einstein toric del Pezzo surface with vanishing Futaki invariant.
In particular, by \cite[Corollary $1.5$]{LY24}, the polarized manifold $(X,\mathcal O_X(-K_X))$ is asymptotically Chow polystable;
see also \cite[Example $9.2$]{Lee25}.

We can also verify the Chow polystability of this example directly using the blow-up formula. 
Let
\begin{align}
\begin{split}\label{eq:polytope}
\D&=\conv \Set{
\begin{pmatrix}
0 \\ 0
\end{pmatrix}, 
\begin{pmatrix}
3 \\ 0
\end{pmatrix}, 
\begin{pmatrix}
0 \\ 3
\end{pmatrix}
}, \quad 
\D'=\conv \Set{
\begin{pmatrix}
1 \\ 0
\end{pmatrix}, 
\begin{pmatrix}
2 \\ 0
\end{pmatrix}, 
\begin{pmatrix}
2 \\ 1
\end{pmatrix},
\begin{pmatrix}
1 \\ 2
\end{pmatrix},
\begin{pmatrix}
0 \\ 2
\end{pmatrix},
\begin{pmatrix}
0 \\ 1
\end{pmatrix}
},\\
D_1&=\conv\Set{\begin{pmatrix}
0 \\ 0
\end{pmatrix},
\begin{pmatrix}
1 \\ 0
\end{pmatrix},
\begin{pmatrix}
0 \\ 1
\end{pmatrix}
}, \quad
D_2=\conv\Set{
\begin{pmatrix}
3 \\ 0
\end{pmatrix},
\begin{pmatrix}
2 \\ 1
\end{pmatrix},
\begin{pmatrix}
2 \\ 0
\end{pmatrix}
}, \qquad  \\
D_3&=\conv\Set{
\begin{pmatrix}
0 \\ 3
\end{pmatrix},
\begin{pmatrix}
0 \\ 2
\end{pmatrix},
\begin{pmatrix}
1 \\ 2
\end{pmatrix}
}.
\end{split}
\end{align}
Then, $\D$ will be decomposed into
\[
\D=\D'\cup \smallcup_{a=1}^3 D_a
\]
with $\bs p_1=(0,0)$, $\bs p_2=(3,0)$, $\bs p_3=(0,3)$. Since $\D'$ is already a lattice polygon, we may take $k=1$. 
Moreover, $m_a=1$ for each $a=1,2,3$, and the matrices defined in 
$\eqref{eq:Matrix}$ are 
\[
A_1=\begin{pmatrix}
1 & 0 \\
0 & 1
\end{pmatrix}, \qquad
A_2=\begin{pmatrix}
-1 & -1 \\
1 & 0
\end{pmatrix}, \qquad 
A_3=\begin{pmatrix}
0 & 1 \\
-1 & -1
\end{pmatrix}.
\]
Therefore, 
\begin{align*}
M&=\sum_{a=1}^3m_a=3, \quad  \widetilde M=\sum_{a=1}^3m_a^2=3,  \quad \text{ and } \\
 A&=\#(\p \D\cap \Z^2)-M=6, \qquad B=2\vol(\D)-\widetilde M=6.
\end{align*}
Since 
\[
\int_\D \bs x\, dv=\left(\frac{9}{2}, \, \frac{9}{2} \right),
\]
formula $\eqref{eq:Ehr_sum}$ gives
\[
\bs s_{\D}(i)=\frac{9}{2} 
\begin{pmatrix}
1 \\ 1
\end{pmatrix}i^2
+\frac{9}{2}
\begin{pmatrix}
1 \\ 1
\end{pmatrix}i+
\begin{pmatrix}
1 \\ 1
\end{pmatrix}.
\]
Substituting these values into $\eqref{eq:DF1}$ and $\eqref{eq:DF2}$, we obtain 
\begin{align*}
\mathcal{DF}_{\D, 1}&=\frac{27}{4}\left\{ \begin{pmatrix}
1 \\ 1
\end{pmatrix}-
\begin{pmatrix}
1 \\ 1
\end{pmatrix}
\right\}=
\begin{pmatrix}
0 \\ 0
\end{pmatrix},  \\
\mathcal{DF}_{\D, 2}&=\frac{1}{2}\sum_{a=1}^3p_a-\frac{3}{2}\bs s_\D=\frac{1}{2}\left\{ \begin{pmatrix}
0 \\ 0
\end{pmatrix}+
\begin{pmatrix}
3 \\ 0
\end{pmatrix}+
\begin{pmatrix}
0 \\ 3
\end{pmatrix}
\right\}-\frac{3}{2}\begin{pmatrix}
1 \\ 1
\end{pmatrix}=\begin{pmatrix}
0 \\ 0
\end{pmatrix}.
\end{align*}
Hence, 
\[
\Chow_{\D'}(\bs x;i)=\Chow_\D(\bs x; i)+i\begin{pmatrix}
0 \\ 0
\end{pmatrix}+\begin{pmatrix}
0 \\ 0
\end{pmatrix}=\begin{pmatrix}
0 \\ 0
\end{pmatrix}
\]
by $\eqref{eq:BlowUp}$, which is consistent with \cite[Corollary $1.5$]{LY24}.

\subsection{Blow-up of $\C P^2$ at four distinct points}\label{sec:4pts}
We next consider the blow-up of $\C P^2$ at four points. 
To obtain the {\emph{toric blow-up}} 
manifold $\mathrm{Bl}_{(\text{$4$pts})}(\C P^2)$, 
the set of blown-up points $\Lambda=\set{p_1,p_2,p_3,p_4}$ need to satisfy the condition
that $p_1, p_3$ and $p_4$ lie on the same projective line $\C P^1 \subset \C P^2$, as illustrated in Figure $\ref{fig:Y}$. 

More precisely, let $\D'\subset M_\R$ be the moment polytope in $\eqref{eq:polytope}$, corresponding to 
the toric surface $X:=\Bl_{(p_1,p_2,p_3)}(\C P^2)$ constructed in Section \ref{sec:3pts}. 
Choose the torus-fixed point $p_4\in X$ corresponding to the vertex $\bs p_4=(0,2)$ of $\D'$.
Let $Z$ denote the blow-up of $X$ at $p_4$, namely, $Z:=\Bl_{(p_4)}(X)$. 
Equivalently, we have the sequence of blow-ups
\[
Z=\Bl_{(p_4)}(X)\dasharrow X \dasharrow \C P^2.
\]
As shown in Figure \ref{fig:Y}, the corresponding polytope $\D''$ of the toric manifold $Z$ is obtained by chopping off the vertex $\bs p_4$ of $\D'$.
We therefore consider the decomposition $\D'=\D''\cup D_1$, where
\begin{align*}
\D''&=\conv\Set{\begin{pmatrix}
1 \\ 0
\end{pmatrix}, \begin{pmatrix}
2 \\ 0
\end{pmatrix},
\begin{pmatrix}
2 \\ 1
\end{pmatrix},
\begin{pmatrix}
1 \\ 2
\end{pmatrix},
\begin{pmatrix}
1/2 \\ 2
\end{pmatrix},
\begin{pmatrix}
0 \\ 3/2
\end{pmatrix},
\begin{pmatrix}
0 \\ 1
\end{pmatrix}
}, \quad \text{and} \\
D_1&=\conv\Set{\begin{pmatrix}
0 \\ 2
\end{pmatrix}, \begin{pmatrix}
0 \\ 3/2
\end{pmatrix}, 
\begin{pmatrix}
1/2 \\ 2
\end{pmatrix}}.
\end{align*}
\begin{center}
\begin{figure}[h!]
\begin{tikzpicture}[x=2.5cm,y=2.5cm]
\draw[thick,dotted] (-1.5,-1) -- (2.5,-1);
\draw[thick,dotted] (-1.0,0) -- (1.0,0);
\draw[thick] (-1.0,1) -- (0,1);
\draw[thick] (-1,1) -- (-1,0.5);
\draw[thick,dotted]  (-1,-1.5) -- (-1,2.5);
\draw[thick,dotted] (0,-1.0) -- (0,1.0);
\draw[black] (1,-1.0) -- (1,0);
\draw[thick,dotted] (2.5,-1.5) -- (-1.5,2.5);
\draw[thick,dotted,red] (-1.5,0) -- (0,1.5);
\draw[thick,red] (-1.0,0.0) -- (0.0,-1.0);
\draw[thick,red] (0,-1) -- (1,-1.0);
\draw[thick,red] (1,-1) -- (1,0);
\draw[thick,red] (1,0) -- (0,1);
\draw[thick,red] (0,1) -- (-0.5,1);
\draw[thick,red] (-0.5,1) -- (-1.0,0.5);
\draw[thick,red] (-1.0,0.5) -- (-1.0,0);
\draw[fill=blue] (-1,-1) circle (2pt) ;
\draw[fill=blue] (-1,2) circle (2pt) ;
\draw[fill=blue] (2,-1) circle (2pt) ;
\draw[fill=red] (-0.5,1) circle (2pt) ;
\draw[fill=red] (-1,0.5) circle (2pt) ;
\node at (0.2,-0.15) {$(1,1)$};
\node at (-0.3,0.25) {$\Delta''$};
\node at (-1.2,0.6) {$(0,\frac{3}{2})$};
\node at (-0.4,0.8) {$(\frac{1}{2},2)$};
\node at (-0.85,0.85) {$D_1$};
\node at (-1,-1.2) {$\boldsymbol p_1=(0,0)$};
\node at (2,-1.2) {$\boldsymbol p_2=(3,0)$};
\node at (-1,2.2) {$\boldsymbol p_3=(0,3)$};
\end{tikzpicture}
\caption{The decomposition of the polygon.}\label{fig:Y}
\end{figure}
\end{center}
To ensure that $k\D''$ a lattice polygon, we must take $k=2$.
Then the matrix defined in $\eqref{eq:Matrix}$ becomes $A_1=\begin{pmatrix}
    0 & 1 \\
    -1 &0
\end{pmatrix}$. Moreover, 
\begin{align}
\begin{split}\label{eq:BlowUpData}
M&=m_1=1, \quad  \widetilde M=m_1^2=1, \quad E_{\D'}(i)=3i^2+3i+1, \\
 A&=\#\bigl(\p (2\D')\cap \Z^2 \bigr)-M=11, \qquad B=2\Vol(2\D')-\widetilde M=23,\\
 \int_{\D'} \bs x\, dv&=\left(3, \, 3 \right),\qquad
 \bs s_{\D'}(i)= 
\begin{pmatrix}
3 \\ 3
\end{pmatrix}i^2
+
\begin{pmatrix}
3 \\ 3
\end{pmatrix}i+
\begin{pmatrix}
1 \\ 1
\end{pmatrix}, \\
\bs s_{2\D'}(i)&=2\bs s_{\D'}(2i)=
\begin{pmatrix}
24 \\ 24
\end{pmatrix}i^2
+
\begin{pmatrix}
12 \\ 12
\end{pmatrix}i+
\begin{pmatrix}
2 \\ 2
\end{pmatrix}.
\end{split}
\end{align}
Substituting these values into $\eqref{eq:DF1}$ and $\eqref{eq:DF2}$, we obtain
\begin{align*}
\mathcal{DF}_{2\D', 1}&=\frac{1}{12}\begin{pmatrix}
1 \\ -1
\end{pmatrix}
+\frac{2}{4}\left\{11
\begin{pmatrix}
0 \\ 2
\end{pmatrix}-23
\begin{pmatrix}
0 \\ 2
\end{pmatrix}
\right\}+\frac{1}{2}\int_{2\D'}\bs x\,dv-\frac{1}{4}\int_{\p(2\D')}\bs x\,d\sigma \\
&=\frac{83}{12}
\begin{pmatrix}
1 \\ -1
\end{pmatrix}, \qquad  \\
\mathcal{DF}_{2\D', 2}&=\frac{23}{12}
\begin{pmatrix}
    \alpha_1+\gamma_1 \\
    \beta_1+\delta_1
\end{pmatrix}+\frac{1}{6}
\begin{pmatrix}
    \alpha_1+\gamma_1 \\
    \beta_1+\delta_1
\end{pmatrix}
+\frac{2}{2}\bs p_4-\frac{1}{2}\bs s_{2\D'}\\
&=\frac{23}{12}\begin{pmatrix}
1 \\ -1
\end{pmatrix}+\frac{1}{6}
\begin{pmatrix}
1 \\ -1
\end{pmatrix}+
\begin{pmatrix}
0 \\ 2
\end{pmatrix}
-\begin{pmatrix}
1 \\ 1
\end{pmatrix}=
\frac{13}{12}\begin{pmatrix}
1 \\ -1
\end{pmatrix}.
\end{align*}
Therefore,
\begin{equation}\label{eq:ChowWt4pts}
\Chow_{2\D^{''}}(\bs x;i)=\Chow_{2\D'}(\bs x; i)+\frac{83}{12}\begin{pmatrix}
1 \\ -1
\end{pmatrix}i+\frac{13}{12}\begin{pmatrix}
1 \\ -1
\end{pmatrix}
\end{equation}
be $\eqref{eq:BlowUp}$. Consequently, $(Z,\mathcal O_Z(1))$ is Chow \emph{unstable}.

Note that the coefficient of $i$ in $\eqref{eq:ChowWt4pts}$ and the constant term are proportional,  since 
\[
\Chow_{2\D'}(\bs x; i)=\begin{pmatrix}
0 \\ 0
\end{pmatrix}.
\]
In other words, 
\[
\dim \mathrm{Span}_\R\Set{\mathrm{Coeff}\bigl(\Chow_{2\D^{''}}(\bs x; i), i \bigr), \mathrm{Coeff}\bigl(\Chow_{2\D^{''}}(\bs x; i), \mathrm{const} \bigr)}=1.
\]
This phenomenon can be explained as follows. Let $X_\Sigma$ be an $n$-dimensional smooth projective toric variety with associated complete fan 
$\Sigma$ in $N_\R$.
Denote by $\mathcal W(X_\Sigma)$ the Weyl group of $X_\Sigma$. By \cite[Proposition $3.1$]{BS99}, we have
\begin{equation}\label{eq:WeylGr}
\mathcal{W}(X_\Sigma)\cong \Set{\gamma \in \GL_n(\Z)| \gamma(\Sigma)=\Sigma}.
\end{equation}
In our situation, the polygon $\D^{''}$ has normal fan $\Sigma''$ in $N_\R\cong \R^2$ whose one-dimensional cones are $\Sigma''(1)=\set{\rho_1, \dots, \rho_7}$,
with primitive ray generators
\[
\pm \bs e_1, \quad  \pm \bs e_2, \quad \pm (\bs e_1+\bs e_2), \quad \bs e_1-\bs e_2, 
\]
where $\bs e_1, \bs e_2$ denote the standard basis vectors of $N_\R$. 
Hence $\Sigma''$ is invariant under the unimodular transformation
$\begin{pmatrix}
0 & -1 \\
-1 & 0
\end{pmatrix}$, which corresponds to the map $(x_1, x_2)\mapsto (-x_2,-x_1)$. 
Therefore,
\begin{equation*}
\mathcal W(Z) \cong \Braket{\begin{pmatrix}
0 & -1 \\
-1 & 0
\end{pmatrix}}\cong \Z_2 
\end{equation*} 
by $\eqref{eq:WeylGr}$.

Let $N_\R^{\mathcal W(Z)}\subset N_\R$ be the $\mathcal W(Z)$-invariant subspace. Then  
$\dim N_\R^{\mathcal W(Z)}=2-1=1$.
Since the coefficients of $\Chow_{2\D^{''}}(\bs x;i)$ are invariant under $\mathcal W(Z)$-action, we obtain
\[
\dim \mathrm{Span}_\R\Set{\mathrm{Coeff}\bigl(\Chow_{2\D^{''}}(\bs x; i) \bigr)}=\dim N_\R^{\mathcal W(Z)}=1.
\]

\subsection{Blow-ups of $\C P^2$ at five and six distinct points}\label{sec:6pts}
Let $Z$ be the toric surface constructed in Section \ref{sec:4pts}. We next choose the torus-fixed point $p_5\in Z$ corresponding to the vertex $\bs p_5=(2,0)$ of the polygon $\D''$. Let $Z_1$ denote the blow-up of $Z$ at $p_5$, namely, $Z_1:=\mathrm{Bl}_{(p_5)}(Z)$. Equivalently, we have the sequence of blow-ups
\begin{equation}\label{eq:SeqZ1}
Z_1=\mathrm{Bl}_{(p_5)}(Z)\dasharrow Z \dasharrow X \dasharrow \C P^2.
\end{equation}
The corresponding decomposition of the associated polygon is 
\begin{equation}\label{eq:decomp}
\D'=\D_1\cup \smallcup_{a=1}^2 D_a=\D^{''}\cup D_1,
\end{equation} 
where 
\begin{align*}
\D_1&=\conv\Set{\begin{pmatrix}
1 \\ 0
\end{pmatrix}, 
\begin{pmatrix}
3/2 \\ 0
\end{pmatrix},
\begin{pmatrix}
2 \\ 1/2
\end{pmatrix},
\begin{pmatrix}
2 \\ 1
\end{pmatrix},
\begin{pmatrix}
1 \\ 2
\end{pmatrix},
\begin{pmatrix}
1/2 \\ 2
\end{pmatrix},
\begin{pmatrix}
0 \\ 3/2
\end{pmatrix},
\begin{pmatrix}
0 \\ 1
\end{pmatrix}
}, \quad \text{and}  \\
D_2&=\conv\Set{\begin{pmatrix}
2 \\ 0
\end{pmatrix}, \begin{pmatrix}
2 \\ 1/2
\end{pmatrix}, 
\begin{pmatrix}
3/2 \\ 0
\end{pmatrix}}.
\end{align*}
In this case, $k=2$ and the matrix defined in $\eqref{eq:Matrix}$ is
$A_2=\begin{pmatrix}
0 & -1 \\ 
1 & 0
\end{pmatrix}$. Proceeding as in $\eqref{eq:BlowUpData}$, we obtain
\begin{align*}
M&=\sum_{j=1}^2 m_j=2, \qquad  \widetilde M=\sum_{j=1}^2 m_j^2=2, \quad \text{and}  \\
 A&=\#\bigl(\p (2\D')\cap \Z^2 \bigr)-M=10, \qquad B=2\Vol(2\D')-\widetilde M=22.
\end{align*}
Therefore by $\eqref{eq:BlowUp}$, the Chow weight of $(Z_1, \mathcal O_{Z_1}(1))$ is given by 
\[
\Chow_{\D_1}(\bs x;i)=\Chow_{\D'}(\bs x;i)+i \mathcal{DF}_{2\D',1}+\mathcal{DF}_{2\D',2},
\]
 where $\mathcal{DF}_{2\D',1}$ amd $\mathcal{DF}_{2\D',2}$ are given by 
\begin{align*}
\mathcal{DF}_{2\D',1}&=\frac{10}{12}\sum_{j=1}^2
\begin{pmatrix}
\alpha_j+\gamma_j \\
\beta_j+\delta_j
\end{pmatrix}+\frac{2}{4}\bigl(10\sum_{\ell=4}^5 \bs p_\ell -22\sum_{\ell=4}^5 \bs p_\ell \bigr)
+\frac{2}{2}\begin{pmatrix}
24 \\24
\end{pmatrix}-\frac{2}{4}\begin{pmatrix}
24 \\24
\end{pmatrix}=\begin{pmatrix}
0 \\0
\end{pmatrix}, \quad \\
\mathcal{DF}_{2\D',2}&=\frac{22}{12}\sum_{j=1}^2
\begin{pmatrix}
\alpha_j+\gamma_j \\
\beta_j+\delta_j
\end{pmatrix}+\frac{1}{6}\sum_{j=1}^2
\begin{pmatrix}
\alpha_j+\gamma_j \\
\beta_j+\delta_j
\end{pmatrix}+ \frac{2}{2}\sum_{\ell=4}^5 \bs p_\ell-\frac{2}{2}\bs s_{2\D'}\\
&=\sum_{\ell=4}^5 \bs p_\ell-\bs s_{2\D'}=\left\{\begin{pmatrix}
0 \\2
\end{pmatrix} +\begin{pmatrix}
2 \\0
\end{pmatrix}\right\}-\begin{pmatrix}
2 \\2
\end{pmatrix}=\begin{pmatrix}
0 \\0
\end{pmatrix},
\end{align*}
respectively. Since $\Chow_{\D_1}(\bs x;i)\equiv \bs 0$, 
we conclude that $(Z_1,\mathcal O_{Z_1}(1))$ is {\emph{asymptotically}} Chow polystable.

Finally, let $p_6 \in Z_1$ be the torus-fixed point corresponding to the vertex $\bs p_6=(1,2)$ of $\Delta_1$.
The induced toric blow-up is  $Z_2:=\mathrm{Bl}_{(p_6)}(Z_1)$, yielding the decomposition 
\[
\D_1=\D_2\cup D_3,
\] 
where
 \begin{align*}
\D_2&=\conv\Set{\begin{pmatrix}
1 \\ 0
\end{pmatrix}, 
\begin{pmatrix}
3/2 \\ 0
\end{pmatrix},
\begin{pmatrix}
2 \\ 1/2
\end{pmatrix},
\begin{pmatrix}
2 \\ 1
\end{pmatrix},
\begin{pmatrix}
5/4 \\ 7/4
\end{pmatrix},
\begin{pmatrix}
3/4 \\ 2
\end{pmatrix},
\begin{pmatrix}
1/2 \\ 2
\end{pmatrix},
\begin{pmatrix}
0 \\ 3/2
\end{pmatrix},
\begin{pmatrix}
0 \\ 1
\end{pmatrix}
} \qquad \text{and} \\
D_3&=\conv\Set{\begin{pmatrix}
1 \\ 2
\end{pmatrix}, \begin{pmatrix}
3/4 \\ 2
\end{pmatrix}, 
\begin{pmatrix}
5/4 \\ 7/4
\end{pmatrix}}.
\end{align*}
(see Figure $\ref{fig:Z2}$.)
The minimal positive integer $k$ such that $k\D_2$ is a lattice polygon is $k=4$. 
Furthermore, the matrix $A_3$ in $\eqref{eq:Matrix}$ and each value of invariants are 
\begin{align*}
A_3&=\begin{pmatrix}
    -1 & 1 \\
    0 &-1
\end{pmatrix}, 
\qquad M=m_3=1, \qquad  \widetilde M= m_3^2=1,   \qquad \\
 A&=\#\bigl(\p (4\D_1)\cap \Z^2 \bigr)-M=20-1=19,  \qquad 
 B=2\Vol(4\D_1)-\widetilde M=2\cdot 44-1=87.
\end{align*}
\begin{center}
\begin{figure}
\begin{tikzpicture}[x=5.0cm,y=5.0cm]
\draw[thick,dotted] (-1.0,0) -- (1.0,0);
\draw[thick,dotted] (0,-1.0) -- (0,1.0);
\draw[thick,dotted,red] (-0.75,1.25) -- (0.75,0.5);
\draw[thick,red] (-0.25,1) -- (0.25,0.75);
\draw[thick,red] (-1.0,0.0) -- (0.0,-1.0);
\draw[thick,red] (0,-1) -- (0.5,-1);
\draw[thick,red] (1,-0.5) -- (1,0);
\draw[thick,red] (1,0) -- (0,1);
\draw[thick,red] (-0.25,1) -- (-0.5,1);
\draw[thick] (0,1) -- (-0.25,1);
\draw[thick] (0,1) -- (0.25,0.75);
\draw[thick,red] (-0.5,1) -- (-1.0,0.5);
\draw[thick,red] (-1.0,0.5) -- (-1.0,0);
\draw[thick,red] (0.5,-1) -- (1.0,-0.5);
\draw[fill=blue] (0,1) circle (2pt) ;
\draw[fill=red] (-0.25,1) circle (2pt) ;
\draw[fill=red] (0.25,0.75) circle (2pt) ;
\node at (0.2,-0.15) {$(1,1)$};
\node at (-0.3,0.25) {$\Delta_2$};
\node at (-1.2,0.6) {$(0,\frac{3}{2})$};
\node at (-1.2,0) {$(0,1)$};
\node at (1.2,0) {$(2,1)$};
\node at (1.2,-0.5) {$(2,\frac{1}{2})$};
\node at (0.7,-0.95) {$(\frac{3}{2},0)$};
\node at (-0.65,1.0) {$(\frac{1}{2},2)$};
\node at (-0.25,0.9) {$(\frac{3}{4},2)$};
\node at (0.25,0.67) {$(\frac{5}{4},\frac{7}{4})$};
\node at (0,0.93) {$D_3$};
\node at (0.25,1.1) {$\boldsymbol p_6=(1,2)$};
\end{tikzpicture}
\caption{The toric blow-up of $\C P^2$ at six torus-fixed points.
}\label{fig:Z2}
\end{figure}
\end{center}
Direct computation yields
\begin{align*}
    E_{4\D_2}(i)=44i^2+10i+1, \quad \int_{4\D_2}\bs x\, dv=
    \begin{pmatrix}
        176 \\ 176
    \end{pmatrix},\quad
    \bs s_{4\D_2}(1)=\begin{pmatrix}
        220 \\ 220
    \end{pmatrix}, \quad
    \bs s_{4\D_2}(2)=\begin{pmatrix}
        1576 \\ 1576
    \end{pmatrix}.
\end{align*}
Hence, by $\eqref{eq:Ehr_sum}$, 
\[
\bs s_{4\D_2}(i)=
176\begin{pmatrix}
        1 \\ 1
    \end{pmatrix}i^2+828\begin{pmatrix}
        1 \\ 1
    \end{pmatrix}i
    -784176\begin{pmatrix}
        1 \\ 1
    \end{pmatrix}.
\]
Consequently,
\begin{align*}
\mathcal{DF}_{4\D_2,1}&=\frac{19}{12}
\begin{pmatrix}
\alpha_3+\gamma_3 \\
\beta_3+\delta_3
\end{pmatrix}+\frac{4}{4}\bigl(19 \bs p_6 -87 \bs p_6 \bigr)
+\frac{1}{2}\begin{pmatrix}
176 \\176
\end{pmatrix}-\frac{1}{4}\begin{pmatrix}
1656 \\1656
\end{pmatrix}\\
&=\frac{19}{12}
\begin{pmatrix}
0 \\
-1
\end{pmatrix}
-68\begin{pmatrix}
1 \\
2
\end{pmatrix}+(88-414)\begin{pmatrix}
1 \\
1
\end{pmatrix}
=\begin{pmatrix}
-394 \\-\frac{4747}{12}
\end{pmatrix},     \quad \text{and} \\
\mathcal{DF}_{4\D_2,2}&=\frac{87}{12}
\begin{pmatrix}
0 \\
-1
\end{pmatrix}+\frac{1}{6}\begin{pmatrix}
0 \\
-1
\end{pmatrix}+2\begin{pmatrix}
1 \\
2
\end{pmatrix}-392\begin{pmatrix}
1 \\
1
\end{pmatrix}=
\begin{pmatrix}
-390 \\
-\frac{4745}{12}
\end{pmatrix}.
\end{align*}
As a consequence, we obtain $\dim \mathrm{Span}_\R\Set{\mathrm{Coeff}(\Chow_{4\D_2}(\bs x;i))}=2$ and
\begin{equation}\label{eq:ChowWt6pts} 
\Chow_{4\D_2}(\bs x;i)=-\begin{pmatrix}
394 \\
\frac{4747}{12}
\end{pmatrix}i-\begin{pmatrix}
390 \\
\frac{4745}{12}
\end{pmatrix}.
\end{equation}
Therefore, $(X_{\D_2}, \mathcal O_{X_{\D_2}}(4))$ is Chow unstable.

On the other hand, suppose that $\Lambda=\set{p_1,p_2, \dots, p_6}\subset \C P^2$ is a collection of six points in general position; namely, no three points are collinear and no six points lie on a conic. Then
\[
\widetilde X:=\mathrm{Bl}_\Lambda(\C P^2)\dasharrow \C P^2 
\]
is the del Pezzo surface of degree three. In particular, $\widetilde X$ admits a K\"ahler-Einstein metric in the class $c_1(X)$ by a theorem of Tian; 
see, \cite{Ti90} and \cite[Theorem $1.4$]{Ts12}.
Moreover, it is known that the Lie algebra $\mathfrak h(\widetilde X)$ of holomorphic vector fields on the blow-up of $\C P^2$ at four 
or more general points is trivial: $\mathfrak h(\widetilde X)=\set{0}$. Hence, by Donaldson's theorem \cite{Don01},
$(\widetilde X, -K_{\widetilde X})$ is asymptotically Chow stable. 

Therefore, the computation in $\eqref{eq:ChowWt6pts}$ reveals a striking contrast between the Chow weights of toric blow-ups and those arising from blow-ups at points in general position.

\subsection{The blow-up at a point on a Hirzebruch surface}\label{sec:1stStep}
We conclude this section with the following observation.
If the polygon $\D\subset \R^2$ is a rectangle, then $\Chow_\D(\bs x;i)=\bs 0$. 
Thus, without loss of generality, we may assume that a Delzant polygon $\D\subset \R^2$ with $\#\mathcal V(\D)=4$ has vertices
\[
\bs p_1=\begin{pmatrix}
0 \\ 0
\end{pmatrix}, \quad
\bs p_2=\begin{pmatrix}
0 \\ a
\end{pmatrix}, \quad
\bs p_3=\begin{pmatrix}
b \\ a
\end{pmatrix}, \quad
\bs p_4=\begin{pmatrix}
b+an \\ 0
\end{pmatrix}, 
\]
where $a$, $b$, $n$ are positive integers. Let $\D_n(a,b):=\conv\set{\bs p_1, \bs p_2, \bs p_3, \bs p_4}$.
We remark that every two-dimensional Delzant polygon with four vertices, except rectangles, can be written in the form $\D_n(a,b)$ up to parallel translations and $SL_2(\Z)$-transformations. 

For later use, we compute the following quantities: 
\begin{proposition}\label{prop:FundVal}
We have
\begin{align*}
\Vol(\D_n(a,b))&=ab+\frac{1}{2}a^2n, \\
E_{\D_n(a,b)}(i)&=\left(ab+\frac{1}{2}a^2n\right)i^2+\left(a+b+\frac{1}{2}an\right)i+1,\\
\int_{\D_n(a,b)}\bs x\, dv&=\frac{a}{6}
\begin{pmatrix}
a^2n^2+3abn+3b^2 \\
a(an+3b)
\end{pmatrix}, \qquad \text{and}\\
\bs s_{\D_n(a,b)}(i)&=\frac{a}{6}
\begin{pmatrix}
a^2n^2+3abn+3b^2 \\
a(an+3b)
\end{pmatrix}i^2+\frac{1}{4}
\begin{pmatrix}
a^2n^2+a^n+2abn+2ab+2b^2 \\
2a(a+b)
\end{pmatrix}i \\
&+\frac{1}{12}\begin{pmatrix}
an^2+3an+6b \\
2a(3-n)
\end{pmatrix}.
\end{align*}
\end{proposition}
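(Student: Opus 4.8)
The plan is to compute each of the four quantities directly from the definitions by exploiting the triangular decomposition of $\D_n(a,b)$ together with the affine transformation rules already established in Proposition \ref{prop:AffineTrans} and the building-block formulas of Claim \ref{claim:Rect}. The first step is to decompose $\D_n(a,b)$ into two pieces on which everything is explicitly computable: the rectangle $R=\conv\{(0,0),(0,a),(b,a),(b,0)\}$ and the triangle $T=\conv\{(b,0),(b,a),(b+an,0)\}$. Since $\Vol$, $E$, $\int\bs x\,dv$ and $\bs s$ are additive along such decompositions (up to subtracting the contribution of the shared edge $L=\{b\}\times[0,a]$, exactly as in \eqref{eq:Invariants}), it suffices to know these invariants on a rectangle and on a right triangle. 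For the rectangle $[0,b]\times[0,a]$ the Ehrhart polynomial is $(bi+1)(ai+1)$, the volume is $ab$, the integral $\int \bs x\,dv=(ab^2/2,\,a^2b/2)$, and $\bs s$ factors as a product of two one-dimensional arithmetic-progression sums; all of these are elementary. For the triangle $T$, I would first move it by $-(b,0)\in\Z^2$ (harmless by Proposition \ref{prop:AffineTrans}(1) — although note $\bs s$ and $\int\bs x\,dv$ do shift, so I will track that) and then write $T-(b,0)=B\,\D_{an}$ where $B=\begin{pmatrix}1 & n\\ 0 & 1\end{pmatrix}\in SL_2(\Z)$ maps the standard right triangle $\D_{an}$ of leg length $an$; the needed invariants of $\D_{an}$ are read off from Claim \ref{claim:Rect} (with $i=1$ for the lattice-point counts, or more precisely using the polynomial identities there) and then transported by the $SL_2(\Z)$-rule Proposition \ref{prop:AffineTrans}(2).

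The second step is the bookkeeping: assemble $\Vol(\D_n(a,b))=\Vol(R)+\Vol(T)=ab+\tfrac12 a^2 n$, then $E_{\D_n(a,b)}(i)=E_R(i)+E_T(i)-E_L(i)$ where $E_L(i)=ai+1$ counts the shared vertical edge, which after simplification gives the quoted quadratic $\left(ab+\tfrac12 a^2 n\right)i^2+\left(a+b+\tfrac12 an\right)i+1$; one can cross-check the linear coefficient against Pick's formula since $\#(\p\D_n(a,b)\cap\Z^2)=2a+2b+an$ (the four edges contribute $a$, $b$, $a$, and $b+an$ to the boundary count, with the four vertices each counted once). The integral $\int_{\D_n(a,b)}\bs x\,dv$ is $\int_R \bs x\,dv+\int_T\bs x\,dv$ — no correction term since $L$ has measure zero — and after substituting the rectangle value and the shifted, $B$-transformed triangle value one collects the components into $\tfrac{a}{6}(a^2n^2+3abn+3b^2,\ a(an+3b))$. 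Finally $\bs s_{\D_n(a,b)}(i)=\bs s_R(i)+\bs s_T(i)-\bs s_L(i)$, where $\bs s_L(i)=(bi+b,\ \tfrac{a}{2}(ai+1))$ accounts for the lattice points on the shared edge; expanding everything as a quadratic in $i$ and matching the $i^2$, $i^1$, $i^0$ coefficients yields the three-line formula in the statement.

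The main obstacle I anticipate is purely computational rather than conceptual: the lattice sum $\bs s_{\D_n(a,b)}(i)$, and in particular its constant term $\tfrac{1}{12}(an^2+3an+6b,\ 2a(3-n))$, requires carefully tracking several sources of the shift $(b,0)$ and of the $SL_2(\Z)$-matrix $B$ acting on the triangle's barycenter-type data, and it is easy to drop a term or misplace a factor of $\tfrac12$ when subtracting the edge contribution $\bs s_L$. To keep this under control I would (i) first verify the $i^2$-coefficient, which must equal $\int_{\D_n(a,b)}\bs x\,dv$ and so provides a free consistency check; (ii) verify the $i^1$-coefficient against the general formula $\tfrac12\int_{\p \D}\bs x\,d\sigma$, computing the latter edge-by-edge using the $d\sigma_i$ convention from Section \ref{sec:Prelim}; and (iii) sanity-check the whole answer in the special case $n=0$ (a genuine rectangle, so $\Chow$ vanishes and the formulas must reduce to the product formulas for $[0,b]\times[0,a]$) and in the case $a=b=1$, $n=1$, which is the Hirzebruch surface $\mathbb F_1=\Bl_{\mathrm{pt}}\C P^2$ and can be checked against the blow-up computation of Section \ref{sec:3pts}. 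With these three independent checks in place, the remaining work is a routine, if lengthy, expansion of polynomials in $i$ with coefficients polynomial in $a,b,n$.
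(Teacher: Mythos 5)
The paper offers no actual proof here (the calculations are "left to the reader"), so the only question is whether your computation scheme goes through. Your overall plan — split $\Delta_n(a,b)$ into the rectangle $R=[0,b]\times[0,a]$ and the right triangle $T=\mathrm{conv}\{(b,0),(b,a),(b+an,0)\}$, use the additivity rules \eqref{eq:Invariants}, and cross-check the $i^2$- and $i$-coefficients against $\int_\Delta \bs x\,dv$ and $\tfrac12\int_{\partial\Delta}\bs x\,d\sigma$ — is sound, and your consistency checks are genuinely useful (indeed the linear coefficient reveals that "$a^n$" in the statement is a typo for $a^2n$). But the key reduction for the triangle piece is false: $T-(b,0)=\mathrm{conv}\{(0,0),(0,a),(an,0)\}$ has area $\tfrac12 a^2n$, while $\Delta_{an}$ has area $\tfrac12 a^2n^2$, so no matrix $B\in SL_2(\Z)$ can carry $\Delta_{an}$ onto it (your proposed $B=\left(\begin{smallmatrix}1&n\\0&1\end{smallmatrix}\right)$ sends $(0,an)$ to $(an^2,an)$, not to $(0,a)$). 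More structurally, $T$ is not a unimodular triangle at all: at the vertex $(b,a)$ the primitive edge directions are $(0,-1)$ and $(n,-1)$, with determinant $n$, so for $n\ge 2$ the triangle is not $SL_2(\Z)$-equivalent to any right isoceles lattice triangle and Claim \ref{claim:Rect} cannot be imported. This is the step where your proof actually breaks, not mere bookkeeping.

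The gap is repairable without changing your philosophy. Either (a) abandon the reduction to $\Delta_{an}$ and sum over rows of the whole trapezoid directly: for $y=0,1,\dots,ia$ the slice of $i\Delta_n(a,b)$ at height $y$ is the full integer interval $0\le x\le ib+n(ia-y)$, so $E$, $\bs s$ and the integrals reduce to sums of $m_y$, $m_y^2$, $m_y^3$ with $m_y=ib+n(ia-y)$ (this reproduces all four stated formulas, including the constant term $\tfrac{1}{12}(an^2+3an+6b,\,2a(3-n))$); or (b) keep the decomposition but further slice $T$ into the $n$ triangles $\mathrm{conv}\{(b,a),(b+(k-1)a,0),(b+ka,0)\}$, each of which \emph{is} unimodularly equivalent to $\Delta_a$, and apply inclusion–exclusion on the $n-1$ internal edges. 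A smaller slip worth fixing in the same pass: the shared edge $L=\{b\}\times[0,a]$ has $\bs s_L(i)=\bigl(b(ai+1),\tfrac{a}{2}(ai+1)\bigr)$, so its first component is $abi+b$, not $bi+b$.
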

\begin{proof}
All calculations are straightforward and are left to the reader.   
\end{proof}
Substituting the quantities in Proposition \ref{prop:FundVal} into $\eqref{def:ChowWt}$, we obtain
\[
\Chow_{\D_n(a,b)}(\bs x; i)=\frac{1}{24}a^2n(ai+1)(an-a+2b)
\begin{pmatrix}
n \\ -2
\end{pmatrix}.
\]
Next, we compute the Chow weight of the manifold obtained by successively blowing up the four torus-fixed points $\bs p_1$, $\bs p_2$, $\bs p_3$ and $\bs p_4$.
For these blow-ups, we have
\begin{align*}
\begin{pmatrix}
\alpha_1+\gamma_1 \\
\beta_1+\delta_1
\end{pmatrix}&=
\begin{pmatrix}
1 \\ 1
\end{pmatrix}, \quad
\begin{pmatrix}
\alpha_2+\gamma_2 \\
\beta_2+\delta_2
\end{pmatrix}=
\begin{pmatrix}
1 \\ -1
\end{pmatrix}, \quad
\begin{pmatrix}
\alpha_3+\gamma_3 \\
\beta_3+\delta_3
\end{pmatrix}=
\begin{pmatrix}
n-1 \\ -1
\end{pmatrix}, \quad
\begin{pmatrix}
\alpha_4+\gamma_4 \\
\beta_4+\delta_4
\end{pmatrix}=
\begin{pmatrix}
-1-n \\ 1
\end{pmatrix},
\end{align*} 
respectively. Thus, 
\begin{align}\label{eq:EachValue1}
\begin{split}
\sum_{a=1}^4m_a^3\begin{pmatrix}
\alpha_a+\gamma_a \\
\beta_a+\delta_a
\end{pmatrix}&=
\begin{pmatrix}
m_1^3+m_2^3+(n-1)m_3^3-(n+1)m_4^3 \\
m_1^3-m_2^3-m_3^3+m_4^3
\end{pmatrix}, \\
\sum_{a=1}^4m_a\begin{pmatrix}
\alpha_a+\gamma_a \\
\beta_a+\delta_a
\end{pmatrix}&=
\begin{pmatrix}
m_1+m_2+(n-1)m_3-(n+1)m_4 \\
m_1-m_2-m_3+m_4
\end{pmatrix}, \\
\sum_{a=1}^4m_a^2\bs p_a=&
\begin{pmatrix}
bm_3^2+(b+an)m_4^2 \\
a(m_2^2+m_3^2)
\end{pmatrix}, \quad \text{and} \quad
\sum_{a=1}^4m_a\bs p_a=
\begin{pmatrix}
bm_3+(b+an)m_4 \\
a(m_2+m_3)
\end{pmatrix}.
\end{split}
\end{align}
Since $k\D_n(a,b)=\D_n(ka,kb)$, we further obtain
\[
\Chow_{k\D_n(a,b)}(\bs x; i)=\frac{k^3}{24}a^2n(aki+1)(an-a+2b)
\begin{pmatrix}
n \\ -2
\end{pmatrix},
\]
together with
\begin{align}
\begin{split}\label{eq:EachValue2}
A&=k(2a+an+2b)-M, \quad B=ak^2(an+2b)-\widetilde M, \\
\int_{k\D_n(a,b)}\bs x\, dv&=\frac{ak^3}{6}
\begin{pmatrix}
a^2n^2+3abn+3b^2 \\
a(an+3b)
\end{pmatrix}, \\
\int_{\p(k\D_n(a,b))}\bs x\, d\sigma&= \frac{k^2}{2}
\begin{pmatrix}
a^2n^2+a^2n+2abn+2ab+2b^2 \\
2a(a+b)
\end{pmatrix}, \\
\bs s_{k\D_n(a,b)}&=\frac{k}{12}
\begin{pmatrix}
an^2+3an+6b \\
2a(3-n)
\end{pmatrix}.
\end{split}
\end{align}
Substituting $\eqref{eq:EachValue1}$ and $\eqref{eq:EachValue2}$ into $\eqref{eq:DF1}$ and $\eqref{eq:DF2}$, 
we obtain an explicit formula for 
\[
\Chow_{k\D_n'(a,b)}(\bs x;i)
\] 
via $\eqref{eq:BlowUp}$.

\vskip 7pt

\noindent{\bfseries Conflict of interest:} The authors have no competing interests to declare that are relevant to the content of this article.

\vskip 7pt

\noindent{\bfseries Data Availability Statement:} Data sharing not applicable to this article as no datasets were generated or analysed during the current study.

\begin{bibdiv}
\begin{biblist}

\bib{AP09}{article}{
author={Arezzo, Claudio},
author={Pacard, Frank} 
 title={Blowing up K\"ahler manifolds with constant scalar curvature, II.},
   journal={Ann. of Math.},
   volume={170},
   date={2009},
   number={2},
 pages={685-738},
}  

\bib{BS99}{article}{
author={Batyrev, Victor. V.},
author={Selivanova, Elena. N. } 
 title={Einstein-K\"ahler metrics on symmetric toric Fano manifolds.},
   journal={J. Reine Angew. Math.},
   volume={512},
   date={1999},
 pages={225-236},
}  

\bib{CdS08}{book}{
author={Cannas da Silva, Ana},
title={Lectures on Symplectic Geometry.},
series={Lecture Notes in Mathematics {\bf{1764}}}
edition= {Second},
publisher = {Springer Berlin, Heidelberg}
year= {2008},
}

\bib{DVZ12}{article}{
author={Della Vedova, Alberto},
author={Zuddas, Fabio},
title={Scalar curvature and asymptotic Chow stability of projective bundles and blowups.},
journal={Trans. Am. Math. Soc.},
volume={364},
number={12},
year={2012},
pages={6495-6511},
}

\bib{Don01}{article}{
author={Donaldson, Simon. K.},
title={Scalar curvature and projective embeddings, I},
journal={J. Differential Geom.},
volume={59},
date={2001},
number={3},
pages={479-522},
}
   
\bib{Fut04}{article}{
author={Futaki, Akito},
title={Asymptotic Chow semi-stability and integral invariants},
journal={Internat. J. Math.},
volume={15},
year={2004},
number={9},
pages={967-979},
}

\bib{Gr23}{article}{
author={Grieve, Nathan},
title={Expectations, Concave Transforms, Chow weights and Roth's theorem for varieties.}, 
journal={Manuscripta Math.},
volume={172},
year={2023},
number={1-2},
pages={291-330},
}

\bib{JR25}{article}{
title={Asymptotics of quantized barycenters of lattice polytopes with applications to algebraic geometry}, 
author={Jin, Chenzi},
author={Rubinstein, Yanir A.}
journal={Math. Z.},
volume={309},
number={3},
year={2025},
pages={49},
}

\bib{Lee25}{article}{
 title={Asymptotic Chow stability of symmetric reflexive toric varieties}, 
      author={Lee, King Leung},
      year={2025},
      journal={Moduli},
      volume={2:e9},
    }

\bib{LLSW19}{article}{
author={Lee, King Leung},
author={Li, Zhiyuan},
author={Sturm, Jacob},  
author={Wang, Xiaowei},
title={Asymptotic Chow stability of toric Del Pezzo surfaces},
journal={Math. Res. Lett.},
volume={26},
number={6},
year={2019},
pages={1759-1787},
}

\bib{LY24}{article}{
title={Asymptotic Chow stability of uniformly K-stable toric varieties}, 
author={Lee, King Leung},
author={Yotsutani, Naoto}
year={2024},
journal={arXiv:$2405$.$06883$v3},
}

\bib{Ler95}{article}{
author={Lerman, Eugene},
title={Symplectic cuts.},
journal={Math. Res. Lett.},
volume={2},
year={1995},
pages={247-258},
}

\bib{LT97}{article}{ 
author={Lerman, Eugene},
author={Tolman, Susan}
title={Hamiltonian torus actions on symplectic orbifolds and toric varieties.},
journal={Trans. A.M.S.},
volume={349},
number={10}
year={1997},
pages={4201-4230},
}

\bib{Oda12}{article}{
author={Odaka, Yuji}
title={The Calabi Conjecture and K-stability.},
journal={Int. Math. Res. Not. IMRN.},
volume={2012},
number={10}
year={2012},
pages={2272-2288},
}

\bib{Ono11}{article}{
author={Ono, Hajime},
title={A necessary condition for Chow semistability of polarized toric manifolds.},
journal={J. Math. Soc. Japan.},
volume={63},
number={4}
year={2011},
pages={1377-1389},
}


\bib{OSY12}{article}{
author={Ono, Hajime},
author={Sano, Yuji},
author={Yotsutani, Naoto},
title={An example of an asymptotically Chow unstable manifold with constant scalar curvature.},
journal={Ann. Inst. Fourier (Grenoble)},
volume={62},
number={4},
year={2012},
pages={1265-1287},
}

\bib{RT07}{article}{
author={Ross, Julius},
author={Thomas, Richard}, 
title={A study of the Hilbert-Mumford criterion for the stability of projective varieties},
journal={J. Alg. Geom}, 
volume={16},
date={2007},
 pages={201-255},
}

\bib{St10}{article}{
author={Stoppa, Jacopo},
title={Unstable blowups},
journal={J. Algebraic Geom},
volume={19},
date={2010},
number={1},
pages={1-17},
}

\bib{Ti90}{article}{ 
author={Tian, Gang},
title={On Calabi’s conjecture for complex surfaces with positive first Chern class},
journal={Invent. Math.},
volume={101},
date={1990},
number={1},
pages={101-172},
}

\bib{Ts12}{article}{ 
author={Tosatti, Valentino},
title={K\"ahler-Einstein metrics on Fano surfaces},
journal={Expo. Math.},
volume={30},
date={2012},
pages={11-31},
} 

\bib{YZ19}{article}{
author={Yotsutani, Naoto},
author={Zhou, Bin},
title={Relative algebro-geometric stabilities of toric manifolds},
journal={Tohoku Math. J.},
volume={71}, 
date={2019)}, 
pages={495-524},
}

\bib{ZZ08}{article}{
author={Zhou, Bin},
author={Zhu, Xiaohua},
title={Relative $K$-stability and modified $K$-energy on toric manifolds},
journal={Adv. Math.},
volume={219}, 
date={2008)}, 
pages={1327-1362},
}

\end{biblist}
\end{bibdiv}

\end{document}